\newcommand{\R}{\mathbb{R}}
\newcommand{\N}{\mathbb{N}}
\newcommand {\e}{\varepsilon}
\newcommand{\G}{\Gamma}
\newcommand{\caT}{{\cal T}}
\newcommand {\f}{\frac}
\newcommand {\pd}{\partial}
\newcommand{\beq}{\begin{equation}}
\newcommand{\beqa}{\begin{eqnarray}}
\newcommand{\bea} {\begin{array}{ll}}
\newcommand{\beqan}{\begin{eqnarray*}}
\newcommand{\eeq}{\end{equation}}
\newcommand{\eeqa}{\end{eqnarray}}
\newcommand{\eeqan}{\end{eqnarray*}}
\newcommand{\eea} {\end{array}}
\newtheorem{theorem}{Theorem}[section]
\newtheorem{lemma}[theorem]{Lemma}
\newtheorem{proposition}[theorem]{Proposition}
\newtheoremstyle{remarkb}
	{}
	{}
	{\normalfont}
	{}
	{\bfseries}
	{}
	{ }
	{}
\theoremstyle{remarkb}
\newtheorem{remark}[theorem]{Remark}
\numberwithin{equation}{section}
\newenvironment{acknowledgment}{\noindent{\bf Acknowledgment}}{}
\title{\Large \bf Parabolic models for chemotaxis on weighted networks}
\author{Fabio Camilli~$^{\text{a}}$,  Lucilla Corrias~$^{\text{b}}$
}
\date{\today}
\begin{document}
\maketitle
\pagestyle{plain}
\pagenumbering{arabic}

\begin{abstract}
In  this work we  consider the Keller-Segel model for chemotaxis on networks, both in the doubly parabolic case and in the parabolic-elliptic  one. Introducing  appropriate  transition conditions at   vertices, we prove the existence of a time global and spatially continuous solution   for each of the two systems. The main tool is the use of the explicit formula for the fundamental solution of the heat equation on a weighted graph and of the corresponding sharp estimates.
\end{abstract}

{\bf Key words:} Chemotaxis; network; transmission conditions; heat kernel.

{\bf AMS subject classification:} 92C17; 92C42; 35R02; 35Q92; 35A01.

%%%%%%%%%%%%%%%%%%%%%%%%%%%%%%%%%%%%%%%%%%%%
\section{Introduction}
\label{Sec:intro}
%%%%%%%%%%%%%%%%%%%%%%%%%%%%%%%%%%%%%%%%%%%%%
We  consider the classical Keller-Segel system for chemotaxis
\beq\label{KS}
\begin{split}
u_t&=\Delta u-\nabla\cdot(u\,\nabla c)\\
\e c_t&=\Delta c+u-\alpha\,c
\end{split}
\eeq
on a finite weighted network $\Gamma$, where $\e,\,\alpha \ge 0$.

System \eqref{KS} has been introduced in the early seventies in \cite{KS1,KS2} in order to model the aggregation phenomenon undergone by the slime mold \emph{Dictyostelium discoideum}. In this biological context, $u$ represents the cell concentration of the organism and satisfies the  continuity equation in \eqref{KS}, while $c$ is the chemo-attractant concentration  and solves    the diffusion equation in \eqref{KS}. In the Euclidean case,  i.e. when \eqref{KS}  is considered on a domain of $\R^d$, there is a vast literature on system \eqref{KS}. Depending on the space dimension $d$,    $\e>0$ (double parabolic case) or  $\e=0$ (parabolic-elliptic case) and the initial data   $u^0$, $c^0$, different phenomena can occur: global existence, finite or infinite time blow-up, peaks formation, threshold phenomena, etc.  We refer to \cite{CEM, HPainter,P} and the references therein for more details on that problem.

Even if the study of differential equations on networks goes back to the seminal papers by Lumer \cite{L1,L2} (see also the references in \cite{M1}), in the recent time there is an increasing interest in this type of problems in connection  with applications such as  data transmission,  traffic management, crowd motion and various problems in biology and neurobiology (\cite{DZ,GP,GN,M2,SBP}).

The parabolic model for chemotaxis \eqref{KS} on networks has been recently considered in \cite{BGKS} to describe the evolution of the ameboid organism \emph{Physarum polycephalum}. Indeed, during its evolution this slime mold arranges a network of thin tubes where nutrients and chemical signals are transmitted to the different parts of the organism (see \cite{NYT,TKN}). More precisely, in \cite{BGKS} system \eqref{KS} has been analyzed numerically and  the well posedness of a discrete system obtained via  its   finite differences approximation has been obtained. In \cite{GN} the authors consider a mathematical model in connection with the dermal wound healing process, where fibroblasts move along a polymeric scaffold to fill the wound, driven by chemotaxis. But in contrast with \eqref{KS}, in \cite{GN} the equations satisfied by the cells density $u$ are of hyperbolic type.

The goal of this  paper is to consider the parabolic system \eqref{KS} on a finite weighted network $\G$  composed of $m$ edges and to prove  the existence of a time global and spatially continuous solution $(u,c)$, in both  cases $\e>0$ and $\e=0$.  Therefore system \eqref{KS} shall appear to be formally equivalent to $m$ Keller-Segel systems, one on each of the $m$ edges, coupled via the transition conditions at the vertices of the network. As usual when dealing with differential equations on networks,
the transition conditions at the vertices shall play a crucial role. Coherently with the parabolic nature of the problem (see \cite{M2}), we look for a global continuous solution  on the whole network and consequently we prescribe the  continuity  of $u$ and $c$ at the vertices (see \eqref{contu}-\eqref{contc}). Moreover, we require for $u$   the  flux conservation  at the vertices, while for  $c$ a Kirchhoff type condition which guarantees the validity of the  maximum principle for diffusion equations on networks  (see \eqref{transu} and \eqref{transc} respectively). On the other hand, for simplicity reason, the network we consider has no boundary nodes. Our results can be however extended to the  case of Dirichlet or mixed boundary conditions.

More specifically, we consider solution of \eqref{KS} in the following integral sense
\begin{align}
&u(t,y)=P_tu^0(y)-\int_0^t P_{(t-s)}\pd_x(u(s)\pd_x c(s))(y)ds\,,\qquad\qquad y\in\G\,,  \label{u0} \\
&c(t,y)=e^{-{(\alpha/\e)}t}P_{(t/\e)}c^0(y)+\f1\e\, \int_0^te^{- ( \alpha/\e) (t-s)}P_{((t-s)/\e)}\,u(s)(y)ds\,,\qquad y\in\G\, ,\label{c0}
\end{align}
where $(P_t)_{t\ge0}$ is the semigroup generated by the laplacian $-\Delta_\G$ on $\G$ with domain the space of continuous functions on $\G$, belonging to $H^2$ on every edges of $\G$ and satisfying weighted Kirchhoff conditions at the nodes (see \eqref{operatordomain}). The interest in considering the formulation \eqref{u0}-\eqref{c0} lies in the fact that $(P_t)_{t\ge0}$ is given explicitly through the fundamental solution $H=H(t,x,y)$ (see formula \eqref{def:H} and \eqref{solheat}). Then, with this integral formula at hand, the proofs for local and global existence follow the corresponding  arguments in the Euclidean case, with however specific modifications due to the network structure.  In particular, it is not possible to use   known results about the existence of solutions of \eqref{KS} on a bounded interval $[0,L]$ with homogeneous Neumann boundary conditions (see \cite{HP} for instance). More than this, in order to get appropriate time bounds on the norm  of $u$ and $c$ we need to prove   optimal $L^p$  bounds  for the heat kernel $H$ on $\G$, which improve earlier  results in \cite{R,Cat1,Cat2}.

We remark that  the integral formulation for solutions of the heat equation on networks and the corresponding $L^p$ estimates can be the starting point to derive   qualitative information about the behavior of  the solutions with respect to the structural elements of the network.  We are pursuing this analysis in a forthcoming paper. Moreover they can be useful  not only for the    Keller-Segel model, but also to transpose other problems   of parabolic nature from the Euclidean case to the networks.

The paper is organized as follows. In Section \ref{Sec:preliminaries} we define the network $\G$, we recall the fundamental solution of the heat equation on $\G$  and we deduce the optimal $L^p$-estimates for the heat kernel. Section~\ref{Sec:KSNPP} is devoted to the study of the parabolic-parabolic chemotaxis system on $\G$, while Section \ref{Sec:KSNPE} to the parabolic-elliptic one. In the appendices we give the proofs of some technical results.
%%%%%%%%%%%%%%%%%%%%%%%%%%%%%%%%%%%%%%%%%%%%
\section{The heat equation on the network}
\label{Sec:preliminaries}
%%%%%%%%%%%%%%%%%%%%%%%%%%%%%%%%%%%%%%%%%%%%
This section is devoted to the definition of the network   and to the properties of the fundamental solution of the heat equation on that network. The fundamental solution has been computed by Roth \cite{R} for a finite network and generalized to the case of an infinite homogeneous tree and a countable graph by Cattaneo \cite{Cat1,Cat2}. Therefore, notations are coherent with the ones  in these papers (see also \cite{N1}). However, some properties, such as the optimal $L^1$ and $L^\infty$ time decay, are not contained in the cited papers. Their proofs are sketched in the Appendix \ref{Appendix:B}.
\subsection{The network}
\label{subsec:network}
Let consider a finite, connected and non-oriented (or undirected) {\sl network} $\Gamma$. This means that the underlying {\sl graph} ${\cal G}=(V,E)$ is defined through a non empty finite set of $n$ {\sl vertices} or {\sl nodes}, $V:=\{v_1,\dots,v_n\}$, a non empty finite set of $m$ non-oriented open {\sl edges} (or {\sl links}), $E:=\{e_1,\dots,e_m\}$, and that between every pair of nodes $v_i,v_j\in V$ there exists a path with edges in $E$. Furthermore, we assume that the graph has no self-loops (no edge connecting a vertex to itself). On the other hand, the graph can contain multiple links, i.e. the map $E\mapsto V\times V$ associating to each non-oriented edge its endpoints can be not injective.

Every edge may have a different length. However, we parametrize and normalize each $e_j\in E$ so that to identify $\overline e_j$ with the interval $[0,1]$. Since the network is undirected, every edge $e_j\in E$ can be parametrized in two different ways giving rise to two oriented edges $e_j^\pm$, i.e. there exist two  homeomorphism $\Pi_j^\pm:[0,1]\mapsto (\overline e_j)^\pm$, such that $\Pi_j^+(0)=\Pi_j^-(1)$ and $\Pi_j^+(1)=\Pi_j^-(0)$. We shall call an oriented edge an {\sl arc}, and we shall denote by $a_j$ any of the two edges $e_j^\pm$. Moreover, we shall denote by $-a_j$ the arc opposite to $a_j$ and the initial and terminal endpoints of $a_j$ by $I(a_j)$ and $T(a_j)$, respectively. We also denote by $E(v_i)$  the set of the index $j$ such that the edge $e_j$ has an endpoint at the vertex $v_i\in V$ and by $d(v_i)$  the {\sl degree} of $v_i$, i.e. the cardinality of $E(v_i)$.

Next, we define a {\sl path} $C$ on the network $\Gamma$ as a finite sequence of (at least two) arcs, $(a_{j_1}, \dots,a_{j_k})$, $k\ge2$, such that $T(a_{j_l})=I(a_{j_{l+1}})$, $l=1,\dots,k-1$. Thus a path is always oriented. We associate to each path $C=(a_{j_1}, \dots,a_{j_k})$ its length $|C|$ as the number of the arcs composing $C$. Then, given two points $x$ and $y$ on $\Gamma$, we shall note $C_k(x,y)$ the set of the paths of length $k$ such that $x$ belongs to the first arc of the path and $y$ belongs to the last arc of the path, i.e.
\[
C_k(x,y):=\left\{C=(a_{j_1}, \dots,a_{j_k})\ :\ x\in a_{j_1}\text{ and } y\in a_{j_k}\right\}\,,\qquad k=2,3,\dots\ .
\]
A {\sl geodesic path} joining $x$ to $y$ on $\Gamma$ is any path of minimum length in $\cup_{k\ge2}C_k(x,y)$. We shall denote ${\cal L}(x,y)$ the common length of any geodesic path joining $x$ to $y$  and we also define
\[
\rho(x,y):={\cal L}(x,y)-2\,.
\]

For every $x$ and $y$ belonging to the same $\overline e_j$, we define the distance $d(x,y)$ as
\[
d(x,y):=|(\Pi^\pm_j)^{-1}(x)-(\Pi^\pm_j)^{-1}(y)|\,,\qquad x,\, y\in\overline e_j\,.
\]
Then, for every $x$ and $y$ on $\Gamma$, we define the distance of $x$ to $y$ along $C=(a_{j_1}, \dots,a_{j_k})\in C_k(x,y)$ as
\[
d_C(x,y):=d(x,T(a_{j_1}))+d(y,I(a_{j_k}))+|C|-2\,.
\]
Obviously, $d_C(x,y)$ is symmetric with respect to $x$ and $y$, i.e. $d_C(x,y)=d_{-C}(y,x)$.

Finally, to each non-oriented edge $e_j\in E$ we associate a positive weight $\kappa(e_j)$ and we assume that
\[
0<\kappa_0\le\kappa(e_j)\le\kappa_1\,,\qquad \forall\ j=1,\dots,m\,.
\]
The weights $\kappa(e_j)$ shall influence the transmission or the reflection of $u$ through  the nodes. Indeed, for each couple of arcs $(a_i,a_j)$, we introduce the {\sl transfer/reflection coefficient} from $a_i$ to $a_j$ as
\beq\label{def:trancoef}
\epsilon_{(a_i\to a_j)}:=\left\{
\begin{split}
&2\,\kappa(e_i)(\sum_{l\in E(T(a_i))}\kappa(e_l))^{-1}\qquad\qquad\text{if }T(a_i)=I(a_j)\text{ and }a_j\ne -a_i\text{ (transmission)}\\
&2\,\kappa(e_i)(\sum_{l\in E(T(a_i))}\kappa(e_l))^{-1}-1\qquad\ \text{if }a_j=-a_i\text{ (reflection)}\\
&0\qquad\qquad\qquad\qquad\qquad\qquad\qquad\ \text{otherwise}
\end{split}
\right.
\eeq
The {\sl weight} $\epsilon(C)$ of a path $C=(a_{j_1}, \dots,a_{j_k})$ is then the product of the transfer/reflection coefficients of all the pairs of consecutive arcs composing~$C$, i.e.
\beq\label{def:epsC}
\epsilon(C):=\prod_{l=1}^{k-1}\epsilon_{(a_{j_l}\to a_{j_{l+1}})}\,.
\eeq

It is worth noticing that in case of reflection, the coefficient $\epsilon_{(a_i\to -a_i)}$ may be negative, and so also the weight $\epsilon(C)$ of all path $C$ passing through $a_i$ and $-a_i$ consecutively. Moreover, $\epsilon_{(a_i\to a_j)}\ne\epsilon_{(a_j\to a_i)}$ and $\epsilon(C)\neq\epsilon(-C)$, in general.

The definitions \eqref{def:trancoef} of the transfer/reflection coefficients and \eqref{def:epsC} of the paths weights  come naturally in the construction of the fundamental solution of the heat equation on the network given in Appendix~\ref{Appendix:A}.
\subsection{The heat equation on the network}
\label{subsec:semigroup}
A function $u$ defined on the network $\Gamma$ is a collection of $m$ functions $(u_j)_{j=1}^m$ such that $u_j:=u_{|\overline e_j}$. To every function $u$ on $\Gamma$ we associate the vector valued function $\tilde u=(\tilde u_1,\dots,\tilde u_m)$ defined on $[0,1]$ such that $\tilde u_j:=u\circ\Pi^\pm_j$. Then, we denote $u'_j(x)$ and $u''_j(x)$, $x\in e_j$, the derivatives $\tilde u'_j(\xi)$ and $\tilde u''_j(\xi)$ with respect to $\xi\in(0,1)$, $\xi=(\Pi^\pm_j)^{-1}(x)$. We also define the exterior normal derivative of $u_j$ at the endpoints of the arc $a_j$ as
\[
\f{\partial u_j}{\partial n}(I(a_j))=-\lim_{h\to0^+}\f{\tilde u_j(h)-\tilde u_j(0)}h\quad\text{and}\quad
\f{\partial u_j}{\partial n}(T(a_j))=\lim_{h\to0^-}\f{\tilde u_j(1+h)-\tilde u_j(1)}h\,.
\]

Next, we define the space of continuous functions on $\Gamma$
\[
C^0(\Gamma):=\{u=(u_j)_{j=1}^m\ :\ u_j(v_i)=u_k(v_i)\text{ if } j,k\in E(v_i)\,,\ i=1,\dots, n\}\,,
\]
the integral of a function $u$ over $\Gamma$
\[
\int_\Gamma u(x)dx:=\sum_{j=1}^m\kappa(e_j)\int_0^1\tilde u_j(\xi)\,d\xi\,,
\]
the Lebesgue spaces
\[
\begin{split}
L^p(\Gamma)&:=\{u=(u_j)_{j=1}^m\ :\ \|u\|^p_{L^p(\Gamma)}:=\sum_{j=1}^m\kappa(e_j)\|\tilde u_j\|^p_{L^p(0,1)}<\infty\}\,,\qquad p\in[1,\infty)\,,\\
L^\infty(\G)&:=\{u=(u_j)_{j=1}^m\ :\ \|u\|_{L^\infty(\Gamma)}:=\max_{1\le j\le m}\kappa(e_j)\|\tilde u_j\|_{L^\infty(0,1)}\}\,,
\end{split}
\]
and the Sobolev spaces
\[
W^{1,\infty}(\G):=\{u\in C^0(\G)\ :\ u'\in L^\infty(\G)\}\,,
\]
\[
H^r(\Gamma):=\{u\in C^0(\Gamma)\ :\ \|u\|^2_{H^r(\Gamma)}:=\sum_{j=1}^m\kappa(e_j)\|\tilde u_j\|^2_{H^r(0,1)}<\infty\}\,.
\]

With these notations, we can now introduce the operator $(D(-\Delta_\G),-\Delta_\G)$, where the domain $D(-\Delta_\G)$ is the set of the function $u$ in $H^2(\Gamma)$ satisfying the transmission conditions of Kirchhoff type at every vertex $v_i\in V$,
\beq\label{operatordomain}
D(-\Delta_\G):=\{u\in H^2(\Gamma)\ :\ \sum_{j\in E(v_i)}\kappa(e_j)\f{\partial u_j}{\partial n}(v_i)=0\,,\ i=1,\dots,n\}\,,
\eeq
and, for all $u\in D(-\Delta_\G)$, the laplacian $\Delta_\G$ on $\G$ is naturally defined as $\Delta_\G u=u''$. Then, $-\Delta_\G$ is densely defined in the Hilbert space $L^2(\G)$ endowed with the scalar product
\[
(u,v)_{L^2(\G)}=\sum_{j=1}^m\kappa(e_j)\int_0^1\tilde u'_j(\xi)\tilde v'_j(\xi)\,d\xi\,.
\]
It is also symmetric and positive and consequently accretive. Thanks to the transmission conditions, it can be also proved that $-\Delta_\G$ is $m$-accretive and therefore self-adjoint (see for instance \cite{KMS}). Hence, we can associate to $-\Delta_\G$ a semigroup of contractions on $L^2(\G)$, say $(\caT(t))_{t\ge0}$, whose generator is $\Delta_\G$. To conclude, given any $f\in L^2(\G)$, the function $u(t)=\caT(t)f$ is the unique solution of the heat equation
\beq\label{heat}
\left\{
\begin{array}{ll}
\pd_tu=\Delta_\G u\qquad & \text{on $(0,\infty)\times\G$, } \\
u(0)=f & \text{on $\G$. }
\end{array}
\right.
\eeq
in the space $C([0,\infty),L^2(\G))\cap C((0,\infty),D(-\Delta_\G))\cap C^1((0,\infty),L^2(\G))$.

Problem \eqref{heat} can be also written as a system of $m$ heat equations coupled through the continuity and transmission conditions at the vertex, i.e.
\beq\label{heat2}
\left\{
\begin{array}{ll}
\pd_t u_j =\pd_{xx} u_j\quad & \text{on }(0,\infty)\times e_j\,,\ j=1,\dots,m\\
u_j(0)=f_j\quad & \text{on }e_j\,,\ j=1,\dots,m\\
u_j(t,v_i)=u_k(t,v_i)\text{ if }j,k\in E(v_i)\,,\ i=1,\dots, n& t>0\quad\text{(continuity)}\\
\sum\limits_{j\in E(v_i)}\kappa(e_j)\f{\partial u_j}{\partial n}(t,v_i)=0\,,\  i=1,\dots,n& t>0\quad\text{(transmission condition)}
\end{array}
\right.
\eeq
These transmission conditions together provide, for each node $v_i$, a system of $d(v_i)$ equations for the $d(v_i)$ components $u_j$ of the solution $u$ such that $j\in E(v_i)$. Moreover, they reduces at the vertex $v_i$ of degree 1, $d(v_i)=1$,  to the homogeneous Neumann boundary condition.

Finally, it is worth noticing that the choice of the orientation of the edges $e_j$ has no consequences, since the heat equation \eqref{heat}-\eqref{heat2} and the problem \eqref{KS} are invariant under the transformation $\xi\to (1-\xi)$ that commute $\Pi^+_j$ into $\Pi^-_j$ and vice versa, as well as all the definitions given  above. On the other hand, orientation appears to be necessary for the construction of the fundamental solution of the heat equation on $\G$ below, that we shall use for the resolution of \eqref{KS}. For the analysis of \eqref{heat}-\eqref{heat2} through the  abstract semigroup method see \cite{KMS} and the reference therein.
\subsection{The fundamental solution of the heat equation on the network}
\label{subsec:H}
Let $G(t,z)=\f1{\sqrt{4\pi\,t}}e^{-\f{z^2}{4t}}$ denote the heat kernel on $(0,\infty)\times\R$, and consider the function defined on $(0,\infty)\times\Gamma\times\Gamma$ as
\beq\label{def:H}
H(t,x,y)=\delta_{i,j}\,\kappa^{-1}(e_i)\,G(t,d(x,y))+L(t,x,y)\,,\qquad x\in \overline e_i\,,\ y\in\overline e_j\,,\ i,j\in\{1,\dots,m\}\,,
\eeq
where $\delta_{ij}$ is the usual Kronecker's delta function and
\beq\label{def:L}
L(t,x,y)=\sum_{k\ge \rho(x,y)}\sum_{C\in C_{k+2}(x,y)}\kappa^{-1}(e_i)\,\epsilon(C)\,G(t,d_C(x,y))\,,\qquad x\in\overline e_i\,,\ y\in\overline e_j\,,\ i,j\in\{1,\dots,m\}\,.
\eeq

The first term in \eqref{def:H} is simply the restriction of the fundamental solution of the heat equation on each edge of the network. The second term is determined in such a way that the function $H$ satisfies the continuity and transmission conditions in \eqref{heat2} with respect to $y$, for any fixed $x\in\Gamma$ (see Appendix~\ref{Appendix:A}).
%By symmetry (see Theorem \ref{th:R}), $H$ is also a solution of \eqref{heat2} with respect to $x$, for any fixed $y\in\Gamma$.
%Therefore, it takes into account the influence that the edge $e_j$ has on the edge $e_i$ and vice versa.
More specifically, since the network is composed of $m$ edges, it holds, for all $k\in\N$\,, that
\beq\label{est:card}
\text{card}(C_{k+2}(x,y))\le 2\,\left(\max_{i=1,\dots,n}d(v_i)\right)^{k+1}\le 2\,m^{k+1}\,.
\eeq
Hence, the sum with respect to the paths $C\in C_{k+2}(x,y)$ in \eqref{def:L} is finite. We also have that the coefficients \eqref{def:trancoef} are uniformly bounded : $|\epsilon_{(a_i\to a_j)}|\le2\kappa_1\,\kappa_0^{-1}:=\overline\epsilon$, $i,j=1,\dots,m$. Therefore, by \eqref{est:card}, we get
\beq\label{est:L}
|L(t,x,y)|\le \kappa_0^{-1}\sum_{k=0}^{+\infty}(\overline\epsilon\,m)^{k+1}\,\f{e^{-k^2/4t}}{\sqrt{\pi\,t}}<\infty\,.
\eeq
%We have also that if $x$ and $y$ belong to the same edge ($e_i=e_j$ in \eqref{def:H}-\eqref{def:L}), $L(t,x,y)=0$ and the function $H$ reduced to the heat kernel $G$ on that edge (to be proved.....).
The latter estimate implies that the series giving $L(t,x,y)$ is normally convergent over $[t_1,t_2]\times\Gamma\times\Gamma$, for any fixed $t_1,t_2>0$. Therefore, the associated vector valued function $\tilde H=\tilde H(t,\xi,\eta)$ is continuous with respect to $(t,\xi,\eta)\in(0,\infty)\times[0,1]\times[0,1]$, component by component. Similarly, for any fixed $\xi\in(0,1)$, the derivatives $\partial_t\tilde H$, $\partial_\eta\tilde H$ and $\partial_{\eta\,\eta}\tilde H$ exist and are continuous with respect to $(t,\xi,\eta)\in(0,\infty)\times(0,1)\times(0,1)$. They can be computed differentiating under the sum sign and $\tilde H$ satisfies the heat equation $\partial_t\tilde H=\partial_{\eta\,\eta}\tilde H$, component by component.
%Furthermore, because of the non-oriented nature of the network and of the symmetry of the heat kernel $G$, $\tilde H$ is symmetric with respect to $\eta$ and $\xi$. Therefore, the previous properties hold true with respect to the $\xi$ variable, for any fixed $\eta\in[0,1]$.
These and other properties of the function $H$ are resumed below.

\begin{theorem}[\cite{R}]\label{th:R}
Let $H$ be the function defined in \eqref{def:H}. Then,
\begin{enumerate}
\item[(i)] $H$ is continuous on $(0,\infty)\times\G\times\G$;
\item[(ii)] $\pd_tH(t,x,y)$ exists for all $(t,x,y)\in(0,\infty)\times\G\times\G$ and it is continuous on $(0,\infty)\times\G\times\G$;
\item[(iii)] the derivatives $\pd_\eta\tilde H(t,\xi,\eta)$ and $\pd_{\eta\,\eta}\tilde H(t,\xi,\eta)$, exist for all $(t,\xi,\eta)\in(0,\infty)\times(0,1)\times(0,1)$ and  are continuous on $(0,\infty)\times(0,1)\times(0,1)$;
\item[(iv)] $H(t,x,\cdot)\in D(-\Delta_\G)$ for all $(t,x)\in (0,\infty)\times\G$;
\item[(v)] $\pd_tH(t,x,y)=\pd_{yy}H (t,x,y)$ for all $(t,x,y)\in (0,\infty)\times\G\times\G$;
\item[(vi)]  for all $f\in C^0(\G)$, $\int_\G H(t,x,y)f(x)dx\to f(y)$ for $t\to 0^+$, uniformly with respect to $y\in \G$;
\item[(vii)] for all $f\in C^0(\G)$, the function
          \beq\label{solheat}
           P_tf(y):=\int_\G H(t,x,y)f(x)dx, \qquad (t,y)\in(0,\infty)\times\G
          \eeq
        with $P_0f=f$ is the unique continuous solution of the initial valued problem \eqref{heat}.
\end{enumerate}
Moreover, $H$ is symmetric with respect to $x,y\in\G$, i.e. $H(t,x,y)=H(t,y,x)$ for all $t\in (0,\infty)$ and the properties above hold true with respect to $x$, for any fixed $y$.
\end{theorem}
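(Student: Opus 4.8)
The strategy is to verify the seven properties essentially in the order listed, exploiting that $L(t,x,y)$ is given by a normally convergent series of shifted Gaussians and so inherits term-by-term the regularity of $G$. Properties (i)--(iii) are already sketched in the text preceding the statement: the bound \eqref{est:L} and the analogous bounds obtained after differentiating $G(t,d_C(x,y))$ once or twice in $t$ or $\eta$ (each differentiation produces polynomial-in-$(1/t,d_C)$ prefactors times a Gaussian, and $e^{-k^2/4t}$ times any polynomial in $k$ is still summable against $(\overline\epsilon\,m)^{k+1}$ on compact time intervals bounded away from $0$) show the series and all its differentiated versions are normally convergent on $[t_1,t_2]\times\G\times\G$. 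This legitimizes differentiation under the summation sign, giving continuity of $H$, $\pd_tH$, $\pd_\eta\tilde H$, $\pd_{\eta\eta}\tilde H$. For (v), each summand $G(t,d_C(x,y))$, viewed in the arc-coordinate $\eta$ of the edge containing $y$, solves $\pd_t=\pd_{\eta\eta}$ because $d_C(x,y)$ is an affine function of $\eta$ with slope $\pm1$ (it equals $d(x,T(a_{j_1}))+|C|-2$ plus $d(y,I(a_{j_k}))$, and the latter is $\eta$ or $1-\eta$); the Kronecker term is likewise a translate of a heat solution. Summing, $\pd_tH=\pd_{yy}H$.

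Property (iv) --- that $H(t,x,\cdot)$ lies in $D(-\Delta_\G)$, i.e. is $C^0(\G)$, is $H^2$ on each edge, and satisfies the weighted Kirchhoff conditions --- is the structural heart of the theorem, and this is where I expect the real work to be. The $H^2$-regularity on each edge follows from (iii) together with the bounds above. The continuity across vertices and the Kirchhoff relations are exactly what the construction of $L$ in Appendix \ref{Appendix:A} is designed to enforce: one invokes the identity proved there that the family of coefficients $\epsilon_{(a_i\to a_j)}$ in \eqref{def:trancoef}, together with the combinatorial rule \eqref{def:epsC} for $\epsilon(C)$, makes the mismatch of $H$ (and of $\sum_j\kappa(e_j)\pd_n H_j$) at each vertex telescope to zero. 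Concretely, at a vertex $v_i$ one groups the paths $C\in C_{k+2}(x,y)$ according to their second-to-last arc; the reflection/transmission split of $\epsilon_{(a\to a_j)}$ over $j\in E(v_i)$ is precisely the scattering relation that cancels the jump in the normal derivatives. I would cite Appendix \ref{Appendix:A} for this computation rather than reproduce it.

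For (vi), write $P_tf(y)-f(y)=\int_\G H(t,x,y)(f(x)-f(y))\,dx+f(y)\big(\int_\G H(t,x,y)\,dx-1\big)$. The first term is controlled by splitting the integral over $\{d(x,y)\le\da\}$ and its complement: on the near set use uniform continuity of $f$ on the (compact) network, on the far set use that the Kronecker term contributes nothing and that $\int_\G |L(t,x,y)|\,dx\to 0$ as $t\to0^+$, which follows from \eqref{est:L} since every term carries a factor $e^{-k^2/4t}$ with $k\ge0$ and, for the non-geodesic terms reaching a fixed $y$ from far $x$, $d_C(x,y)$ is bounded below by a positive constant. The mass normalization $\int_\G H(t,x,y)\,dx\to1$ follows from the same splitting applied to $f\equiv1$, together with $\int_\R G(t,z)\,dz=1$ for the diagonal piece. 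Finally (vii): given (i)--(vi), $P_tf$ defined by \eqref{solheat} satisfies the PDE on each edge by (v), lies in $D(-\Delta_\G)$ for $t>0$ by (iv), attains the initial datum by (vi), and hence by uniqueness for \eqref{heat} (the semigroup $\caT(t)$ being the unique solution operator, as recalled in Section \ref{subsec:semigroup}) coincides with $\caT(t)f$; symmetry $H(t,x,y)=H(t,y,x)$ is read off from \eqref{def:H}--\eqref{def:L} using $d(x,y)=d(y,x)$, $d_C(x,y)=d_{-C}(y,x)$, $\kappa^{-1}(e_i)\epsilon(C)=\kappa^{-1}(e_j)\epsilon(-C)$ (an identity to be checked from \eqref{def:trancoef}), and reindexing the path sum by $C\mapsto -C$, after which all statements transfer to the $x$-variable.
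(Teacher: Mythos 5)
Your plan follows essentially the same route as the paper, which itself attributes the theorem to Roth and supports it by exactly the ingredients you list: normal convergence of the Gaussian series (via \eqref{est:card}--\eqref{est:L}) for (i)--(iii) and (v), the Appendix~\ref{Appendix:A} construction to enforce the continuity and weighted Kirchhoff conditions in (iv), the identity $\kappa^{-1}(e_i)\epsilon(C)=\kappa^{-1}(e_j)\epsilon(-C)$ together with $d_C(x,y)=d_{-C}(y,x)$ for the symmetry, and uniqueness for \eqref{heat} to identify $P_t$ with the semigroup in (vii). The only minor imprecision is in (vi), where on the far set $\{d(x,y)\ge\delta\}$ the Kronecker term does not vanish but is merely exponentially small, and the smallness of the $L$-contribution there should be argued from $d_C(x,y)\ge\delta$ (the geodesic distance bound) rather than from \eqref{est:L} alone, whose $k=0$ term does not decay as $t\to0^+$.
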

The function $H$ is also the unique function satisfying properties $(i)$-$(vii)$ in Theorem \ref{th:R}.
As observed in \cite{R}, $(P_t)_{t\ge0}$ is a strongly continuous semigroup on $L^2(\G)$, whose infinitesimal generator is the closure of $-\Delta_\G$ in $L^2(\G)$. It is obviously the same semigroup determined in \cite{KMS} by variational methods.

 It is worth noticing that $H$ is not a priori positive since the weights $\epsilon(C)$ could be negative. Furthermore, the spatial symmetry of $H$ is due to the symmetry of $G$ and to fact that changing $x$ with $y$ in \eqref{def:H}-\eqref{def:L}, the path $C$ changes into $-C$ and $\kappa^{-1}(e_i)\,\epsilon(C)=\kappa^{-1}(e_j)\,\epsilon(-C)$, (see \eqref{def:trancoef}-\eqref{def:epsC}).  The construction of $H$ has been done in \cite{R} in the case $\kappa(e_j)=1$, $\forall j$. The generalization to the case of a weighted graph has been considered in \cite{Cat1,Cat2}, where however the construction is not detailed. Therefore, we give the general construction in the Appendix \ref{Appendix:A}, for the reader convenience.

We close this section showing the optimal decay in time of $H$ and its derivatives. The proofs are sketched in the Appendix \ref{Appendix:B}.
\begin{proposition}\label{prop:proprietaH}
Let $H$ be defined as in  \eqref{def:H}. Then,
\beq\label{int H}
\int_\G H(t,x,y)dy=1\,,\qquad\forall\ (t,x)\in(0,\infty)\times\Gamma\,,
\eeq
and there exist constants $C_i>0$, $i=1,\dots,4$, such that for all $t>0$ it holds
\beq\label{est:norme1H}
\sup_{x\in\G}\|H(t,x,\cdot)\|_{L^1(\G)}\le C_1\,,
\eeq
\beq\label{est:normeinftyH}
\|H(t)\|_{L^\infty(\G\times\G)}\le C_2(1+t^{-1/2})\,,
\eeq
\beq\label{est:norme1derH}
\sup_{x\in\G}\|\partial_yH(t,x,\cdot)\|_{L^1(\G)}+\sup_{y\in\G}\|\partial_yH(t,\cdot,y)\|_{L^1(\G)}\le C_3(1+t^{-1/2})\,,
\eeq
\beq\label{est:normeinftydevH}
\|\partial_yH(t)\|_{L^\infty(\G\times\G)}\le C_4(1+t^{-1})\,.
\eeq
Moreover, since $H$ is symmetric with respect to $x$ and $y$, all the above properties hold true changing $x$ with $y$.
\end{proposition}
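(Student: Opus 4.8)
The plan is to obtain the four estimates by a single two-step scheme: first establish each bound for $t$ in a fixed bounded interval, say $t\in(0,1]$, directly from the explicit series \eqref{def:H}--\eqref{def:L}, and then propagate it to $t\ge1$ by exploiting the semigroup structure of $(P_t)_{t\ge0}$. The step $t\ge1$ is where I expect the real work: the estimate \eqref{est:L} degenerates as $t\to+\infty$ because $\mathrm{card}\,C_{k+2}(x,y)$ grows like $m^{k}$ while the Gaussian factor $e^{-k^2/4t}$ damps only the terms with $k\gtrsim\sqrt t$, so the series is effective only locally in time and a genuinely different argument is needed for large $t$. The identity \eqref{int H} is separate and follows from uniqueness: the constant function $1$ belongs to $D(-\Delta_\G)$ (it is continuous, lies in $H^2(\G)$, and its normal derivatives vanish, so the Kirchhoff conditions \eqref{operatordomain} hold trivially), and $u(t)\equiv 1$ solves \eqref{heat} with datum $1$; hence $P_t 1\equiv 1$ by Theorem \ref{th:R}(vii), i.e. $\int_\G H(t,x,y)\,dx=1$ for all $(t,y)$, and the symmetry of $H$ gives \eqref{int H}.

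For \eqref{est:normeinftyH}, on $(0,1]$ I would estimate $|H(t,x,y)|$ termwise from \eqref{def:H}: the diagonal contribution is $\le\kappa_0^{-1}G(t,0)=O(t^{-1/2})$, and for $L$ one uses $d_C(x,y)\ge k$ when $|C|=k+2$, together with $|\epsilon(C)|\le\overline\epsilon^{\,k+1}$ and $\mathrm{card}\,C_{k+2}(x,y)\le 2m^{k+1}$, to get $|L(t,x,y)|\le\kappa_0^{-1}(4\pi t)^{-1/2}\sum_{k\ge0}(2\overline\epsilon m)^{k+1}e^{-k^2/4t}$, which for $t\le1$ is $O(t^{-1/2})$ since the $k\ge1$ part is dominated by the convergent series $\sum_{k\ge1}(2\overline\epsilon m)^{k+1}e^{-k^2/4}$. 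For $t\ge1$ I would use self-adjointness of $-\Delta_\G$: by the semigroup law $P_t=P_{t/2}\circ P_{t/2}$ and uniqueness of the heat kernel, $H(t,x,y)=\int_\G H(t/2,x,z)H(t/2,z,y)\,dz$, so Cauchy--Schwarz and the symmetry of $H$ give $|H(t,x,y)|\le H(t,x,x)^{1/2}H(t,y,y)^{1/2}$, hence $\|H(t)\|_{L^\infty(\G\times\G)}=\sup_{x\in\G}H(t,x,x)$; and since $H(t,x,x)=\int_\G H(t/2,x,y)^2\,dy$ with $H(t/2,x,\cdot)\in D(-\Delta_\G)$, integrating by parts (the vertex terms cancel by the Kirchhoff conditions) gives $\frac{d}{dt}H(t,x,x)=-\int_\G|\partial_yH(t/2,x,y)|^2\,dy\le0$, so $t\mapsto\|H(t)\|_{L^\infty(\G\times\G)}$ is non-increasing and $\le\|H(1)\|_{L^\infty(\G\times\G)}<\infty$ for $t\ge1$. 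Combining the two regimes yields \eqref{est:normeinftyH}.

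For \eqref{est:norme1H} (which would be immediate with constant $1$ if $H$ were known to be nonnegative, but we do not want to rely on positivity), on $(0,1]$ the diagonal part of $\int_\G|H(t,x,\cdot)|$ is $\le\int_\R G(t,z)\,dz=1$, while integrating $|L|$ over the last edge of each path and using $\int_0^1 G(t,c+\eta)\,d\eta\le\int_0^{+\infty}G(t,z)\,dz=\tfrac12$ for the length-$2$ paths and $\int_k^{+\infty}G(t,z)\,dz\le\frac{\sqrt t}{k\sqrt\pi}e^{-k^2/4t}\le\frac{1}{\sqrt\pi}e^{-k^2/4}$ for $k\ge1$ and $t\le1$ bounds $\sup_x\|L(t,x,\cdot)\|_{L^1(\G)}$ by a constant (the path count and the coefficients being handled as above). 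For $t\ge1$ I would use $\|H(t,x,\cdot)\|_{L^1(\G)}\le|\G|^{1/2}\|H(t,x,\cdot)\|_{L^2(\G)}=|\G|^{1/2}H(2t,x,x)^{1/2}\le|\G|^{1/2}\|H(2)\|_{L^\infty(\G\times\G)}^{1/2}$, finite by \eqref{est:normeinftyH}; this proves \eqref{est:norme1H}.

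Finally, for \eqref{est:norme1derH}--\eqref{est:normeinftydevH}, recall $\partial_yH$ is obtained by differentiating \eqref{def:H}--\eqref{def:L} term by term (Theorem \ref{th:R}(iii) and the discussion preceding it), with $\partial_y[G(t,d_C(x,y))]=\pm(\partial_zG)(t,d_C(x,y))$, and I would use $\sup_z|\partial_zG(t,z)|=O(t^{-1})$ and $\int_\R|\partial_zG(t,z)|\,dz=(\pi t)^{-1/2}$. On $(0,1]$ the argument is identical to the ones for \eqref{est:normeinftyH}--\eqref{est:norme1H} with $G$ replaced by $|\partial_zG|$: the leading ($k=0$ and diagonal) terms produce $O(t^{-1})$ in the $L^\infty$ bound and $O(t^{-1/2})$ in the $L^1$ bound, while the tails $k\ge1$ stay bounded for $t\le1$ because $t^{-3/2}e^{-k^2/4t}\le Ce^{-k^2/8}$ there. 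For $t\ge1$, fixing some $\tau\in(0,1)$ and writing via the semigroup law $\partial_yH(t,x,y)=\int_\G H(t-\tau,x,z)\,\partial_yH(\tau,z,y)\,dz$, I would bound the factor $H(t-\tau,x,\cdot)$ in $L^1(\G)$ by $C_1$ from \eqref{est:norme1H} and the factor $\partial_yH(\tau,z,\cdot)$ by the $(0,1]$-bounds already established, obtaining \eqref{est:norme1derH} and \eqref{est:normeinftydevH} for $t\ge1$. The corresponding statements with $x$ and $y$ exchanged follow from the symmetry of $H$, which completes the proof.
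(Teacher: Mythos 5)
Your proposal is correct, and the small--time part ($t\le 1$) is essentially the paper's own argument: termwise estimation of the Roth series using $d_C(x,y)\ge k$, $|\epsilon(C)|\le\overline\epsilon^{\,k+1}$ and the cardinality bound \eqref{est:card}, with the $k\ge1$ tail controlled by $\sum_k(\overline\epsilon m)^{k+1}e^{-k^2/8}<\infty$ after absorbing the negative powers of $t$ into $e^{-k^2/8t}$. Where you genuinely diverge from the paper is in the other two ingredients. For \eqref{int H} the paper argues directly: it shows $\partial_t\int_\G H(t,x,y)\,dy=\int_\G\partial_{yy}H\,dy=0$ and then computes $\lim_{t\to0^+}\int_\G H(t,x,y)\,dy=1$ by splitting off the Gaussian on the edge containing $x$ and killing the remainder with the factor $e^{-(\delta^2+k^2)/4t}$; your route via uniqueness (the constant $1$ lies in $D(-\Delta_\G)$, hence $P_t1\equiv1$, then symmetry) is cleaner and equally valid. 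For the regime $t\ge1$ the paper propagates the bounds $|H(1,x,y)|\le K$ and $|\partial_yH(1,x,y)|\le K$ forward in time by invoking the maximum principle for parabolic network equations of von Below \cite{JvB2}; you instead use the Chapman--Kolmogorov identity together with self-adjointness --- Cauchy--Schwarz giving $|H(t,x,y)|\le H(t,x,x)^{1/2}H(t,y,y)^{1/2}$, monotonicity of $t\mapsto H(t,x,x)$ from the energy identity with vanishing vertex terms, and the factorization $\partial_yH(t,\cdot,\cdot)=H(t-\tau)\ast\partial_yH(\tau,\cdot,\cdot)$ for the derivative bounds. Your version is self-contained (it rests only on Theorem \ref{th:R}(vii) and the Kirchhoff conditions, not on an external maximum principle), at the price of first establishing the semigroup identity for $H$; the paper's version is shorter but imports \cite{JvB2}. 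Both correctly avoid any positivity assumption on $H$, which is the reason \eqref{int H} does not immediately yield \eqref{est:norme1H}.
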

%
%%%%%%%%%%%%%%%%%%%%%%%%%%%%%%%%%%%%%%%%%%%%
\section{The parabolic-parabolic Keller-Segel system on the network}
\label{Sec:KSNPP}
%%%%%%%%%%%%%%%%%%%%%%%%%%%%%%%%%%%%%%%%%%%%
According to the notations and definitions of the previous section, system \eqref{KS}, endowed with the natural continuity and transmission conditions, can be written on the network $\G$ as following
\begin{eqnarray}
\label{KSunet}
&&\pd_t u_j =\pd_{yy} u_j-\pd_y( u_j\,\pd_y c_j)\qquad\qquad\qquad\qquad\qquad\ \text{on }(0,\infty)\times e_j\,,\ j=1,\dots,m,\\
\label{KScnet}
&&\e\,\pd_tc_j =\pd_{yy} c_j+u_j-\alpha\, c_j\qquad\qquad\qquad\qquad\qquad\ \text{ on }(0,\infty)\times e_j\,,\ j=1,\dots,m,\\
\label{initial}
&&u_j(0,y)=u_j^0(y)\quad\text{and}\quad c_j(0,y)=c_j^0(y)\,,\qquad\qquad  y\in\G\,,\ j=1,\dots,m,\\
\label{transu}
&&\sum\limits_{j\in  E(v_i)}\kappa(e_j)\f{\partial u_j}{\partial n}(t,v_i)=0\,,\qquad\qquad\qquad\qquad\qquad\quad\ t>0\,,\ i=1,\dots,n,\\
\label{transc}
&&\sum\limits_{j\in  E(v_i)}\kappa(e_j)\f{\partial c_j}{\partial n}(t,v_i)=0,\qquad\qquad\qquad\qquad\qquad\quad\ t>0\,,\ i=1,\dots,n,\\
\label{contu}
&&u_j(t,v_i)=u_k(t,v_i)\text{ if }j,k\in E(v_i)\,,\qquad\qquad\qquad\qquad t>0\,,\ i=1,\dots,n,\\
\label{contc}
&&c_j(t,v_i)=c_k(t,v_i)\text{ if }j,k\in E(v_i)\,,\qquad\qquad\qquad\qquad\ t>0\,,\ i=1,\dots,n\,.
\end{eqnarray}
As for problem \eqref{heat2}, there is no coupling among the $m$ systems \eqref{KSunet}-\eqref{KScnet}-\eqref{initial} on each edge $e_j$. The systems are coupled only through the transmission conditions \eqref{transu} and \eqref{transc}, which express the conservation of the flux at the vertices for $u$ and $c$ (Kirchhoff condition), and through the continuity conditions \eqref{contu} and \eqref{contc}. Again, conditions \eqref{transu}-\eqref{contc} give exactly $2d(v_i)$ equations for the $2d(v_i)$ functions $u_j,c_j$ such that $j\in E(v_i)$. Furthermore, conditions \eqref{transu} and \eqref{transc}, together with the continuity of $u$, guarantee  the conservation of the initial mass
\beq\label{consmass}
\int_\G u(t,y)\,dy=\int_\G u^0(y)\,dy=:M\,,\qquad t>0\,.
\eeq
Indeed
\[
\begin{split}
\f d{dt}\int_\G u(t,y)\,dy&=\sum_{j=1}^m\kappa(e_j)\f d{dt}\int_0^1\tilde u_j(t,\eta)\,d\eta=
\sum_{j=1}^m\kappa(e_j)\int_0^1(\partial_{\eta\eta}\tilde u_j(t,\eta)-\partial_\eta(\tilde u_j(t,\eta)\partial_\eta\tilde c_j(t,\eta)))d\eta\\
&=\sum_{j=1}^m\kappa(e_j)[\partial_{\eta}\tilde u_j(t,\eta)-\tilde u_j(t,\eta)\partial_\eta\tilde c_j(t,\eta)]_0^1
=\sum_{i=1}^n\sum_{j\in E(v_i)}\kappa(e_j)[\f{\partial u_j}{\partial n}-u_j\f{\partial c_j}{\partial n}](t,v_i)=0\,.
\end{split}
\]

\begin{remark}
A different but again natural condition that one can impose at the vertices instead of \eqref{transu} is the conservation of the total flux
\[
\sum\limits_{j\in  E(v_i)}\kappa(e_j)[\f{\partial u_j}{\partial n}-u_j\,\f{\partial c_j}{\partial n}](t,v_i)=0\,,\qquad t>0,\ i=1,\dots,n\,.
\]
However, the latter together with the continuity of $u$ at the vertices and the Kirchhoff condition \eqref{transc} imply \eqref{transu}.
\end{remark}

In this section we assume $\e>0$ and we consider solutions of the Keller-Segel system in the integral form \eqref{u0}-\eqref{c0}. Then, for $f$ integrable over $\G$, we introduce the notation
\[
(H(t)*f)(y):=\int_\G H(t,x,y)f(x)dx\,,\qquad y\in\G\,.
\]
Thanks to the continuity of the heat kernel $H$ on $\G$, if $u$ is continuous on $\G$ and $c$ satisfies the Kirchhoff condition \eqref{transc}, equations \eqref{u0}-\eqref{c0} read equivalently~as
\begin{align}
&u(t,y)=(H(t)*u^0)(y)+\int_0^t (\pd_xH(t-s)*(u(s)\pd_xc(s)))(y)ds\,,\qquad\qquad y\in\G\,,                  \label{u2} \\
&c(t,y)=e^{-{(\alpha/\e)}t}(H(t\,\e^{-1})*c^0)(y)+ \f1\e\int_0^te^{- ( \alpha/\e) (t-s)}(H((t-s)\e^{-1})*u(s))(y)ds\,,\qquad y\in\G\,.           \label{c2}
\end{align}
It is worth noticing that thanks to property \eqref{int H}, any integral solutions \eqref{u2}-\eqref{c2} satisfies the mass conservation \eqref{consmass}.

\begin{theorem}[Local existence]\label{th:KSlocalexist}
Let $\e>0$, $\alpha\ge0$ and assume $u^0\in L^\infty(\G)$, $c^0\in W^{1,\infty}(\G)$. Then, there exist $T=T(\|u^0\|_{L^\infty(\G)},\|\partial_xc^0\|_{L^\infty(\G)},\e)>0$ and a unique integral solution \eqref{u2}-\eqref{c2} of the Keller-Segel system with
\[
u\in L^\infty((0,T);C^0(\G))\,,\quad c\in L^\infty(0,T;W^{1,\infty}(\G))\,,
\]
satisfying the transmission conditions \eqref{transu} and \eqref{transc} and the mass conservation \eqref{consmass}.
\end{theorem}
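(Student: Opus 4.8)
The plan is to set up a Banach fixed point argument in a suitable complete metric space, following the classical strategy for Keller–Segel in the Euclidean setting but using the heat kernel estimates of Proposition~\ref{prop:proprietaH}. For fixed $T>0$ to be chosen small, I would work in the space
\[
X_T:=\left\{(u,c)\in L^\infty((0,T);C^0(\G))\times L^\infty((0,T);W^{1,\infty}(\G))\ :\ \|u\|_{L^\infty_t L^\infty_x}\le R_1\,,\ \|\partial_x c\|_{L^\infty_t L^\infty_x}\le R_2\right\}\,,
\]
endowed with the natural norm $\|(u,c)\|:=\|u\|_{L^\infty((0,T);L^\infty(\G))}+\|c\|_{L^\infty((0,T);W^{1,\infty}(\G))}$, where $R_1:=2\,C\|u^0\|_{L^\infty(\G)}$ and $R_2$ is chosen comparably in terms of $\|c^0\|_{W^{1,\infty}(\G)}$ and $R_1$ (the constants $C$ coming from \eqref{est:norme1H}). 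This set is a closed subset of a Banach space, hence a complete metric space. Define the map $\Phi(u,c)=(\bar u,\bar c)$ by letting $\bar u$ and $\bar c$ be the right-hand sides of \eqref{u2} and \eqref{c2} respectively.

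The first step is to check that $\Phi$ maps $X_T$ into itself. For the $\bar u$ component: by \eqref{int H} and \eqref{est:norme1H}, $\|H(t)*u^0\|_{L^\infty(\G)}\le C_1\|u^0\|_{L^\infty(\G)}$; and by the dual form of \eqref{est:norme1derH} (the $\sup_y\|\partial_yH(t,\cdot,y)\|_{L^1(\G)}$ bound), $\|\partial_xH(t-s)*(u(s)\partial_xc(s))\|_{L^\infty(\G)}\le C_3(1+(t-s)^{-1/2})\|u(s)\|_{L^\infty}\|\partial_xc(s)\|_{L^\infty}$, and since $\int_0^t(1+(t-s)^{-1/2})\,ds=t+2\sqrt t$, the Duhamel term is bounded by $C_3(T+2\sqrt T)R_1R_2$, which can be made $\le R_1/2$ by choosing $T$ small depending on $R_1,R_2$. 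Continuity of $\bar u$ on $\G$ follows because $H(t,x,\cdot)\in C^0(\G)$ and the integral is a norm-convergent superposition; alternatively one invokes properties (i) and (iv) of Theorem~\ref{th:R}. For $\bar c$: the $C^0$-part is immediate from \eqref{int H}--\eqref{est:norme1H} with the exponential factor $e^{-(\alpha/\e)t}\le1$ only helping, while for $\partial_x\bar c$ one differentiates and uses \eqref{est:norme1derH} to get $\|\partial_x\bar c(t)\|_{L^\infty}\le C_3(1+(t/\e)^{-1/2})\|c^0\|_{W^{1,\infty}}+\tfrac1\e\int_0^tC_3(1+((t-s)/\e)^{-1/2})\|u(s)\|_{L^\infty}ds$; the singular kernel $((t-s)/\e)^{-1/2}$ is integrable, contributing $\tfrac1\e\cdot C_3(t+2\sqrt{\e t})R_1$, again small for $T$ small — here one sees the explicit $\e$-dependence of $T$.

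The second step is the contraction estimate. Given $(u_1,c_1),(u_2,c_2)\in X_T$, write the bilinear structure $u_1\partial_xc_1-u_2\partial_xc_2=(u_1-u_2)\partial_xc_1+u_2\partial_x(c_1-c_2)$ and apply the same kernel bounds as above; every term carries a factor $T+2\sqrt T$ (or $\tfrac1\e(T+2\sqrt{\e T})$) times one of the radii $R_1,R_2$, so for $T$ small enough $\Phi$ is a strict contraction on $X_T$. Banach's fixed point theorem then yields a unique $(u,c)\in X_T$ solving \eqref{u2}--\eqref{c2}. Uniqueness in the full class (not merely within the ball) follows by a standard Gronwall argument: the difference of two solutions with the same data satisfies an inequality of the form $\phi(t)\le C\int_0^t(1+(t-s)^{-1/2})\phi(s)\,ds$ with $\phi=\|u_1-u_2\|_{L^\infty}+\|c_1-c_2\|_{W^{1,\infty}}$, forcing $\phi\equiv0$ on $[0,T]$. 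Finally, the transmission conditions \eqref{transu}, \eqref{transc} hold because $H(t,x,\cdot)$ and $\partial_xH(t-s,x,\cdot)$ lie in $D(-\Delta_\G)$ (property (iv) of Theorem~\ref{th:R}, which encodes exactly the weighted Kirchhoff condition) and the Duhamel integrals inherit this; and the mass conservation \eqref{consmass} was already noted to follow from \eqref{int H}, as remarked just before the statement.

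The main obstacle I expect is not the fixed point mechanism itself but the bookkeeping of the time-singularities: the bound \eqref{est:norme1derH} produces an integrand $\sim (t-s)^{-1/2}$, which is borderline integrable, so one must be careful that the small factor gained is $\sqrt T$ (or $\sqrt{\e T}$) rather than $T$, and that the constants track the $\e^{-1}$ prefactor correctly in \eqref{c2} — this is what forces $T$ to depend on $\e$ and degenerates as $\e\to0$ (consistent with the separate treatment of the parabolic-elliptic case in Section~\ref{Sec:KSNPE}). A secondary technical point is justifying that differentiation in $x$ of the convolution $H(t-s)*(u(s)\partial_xc(s))$ can be passed under the integral and that the resulting object is genuinely in $C^0(\G)$ with the right vertex behaviour; this uses the regularity statements (i)--(iii) of Theorem~\ref{th:R} together with the normal convergence of the defining series for $H$ on $[t_1,t_2]\times\G\times\G$.
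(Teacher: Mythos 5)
Your overall strategy --- a Banach fixed point in a ball built from the kernel bounds \eqref{est:norme1H} and \eqref{est:norme1derH} --- is the same as the paper's (the paper contracts in $u$ alone, with $c$ slaved to $u$ through \eqref{c2}, rather than in the pair $(u,c)$, but that difference is cosmetic). There is, however, one genuine gap, and it sits exactly where you write the estimate
\[
\|\partial_x\bar c(t)\|_{L^\infty(\G)}\le C_3\bigl(1+(t/\e)^{-1/2}\bigr)\|c^0\|_{W^{1,\infty}(\G)}+\dots
\]
If you differentiate $H(t\e^{-1})*c^0$ by putting the derivative on the kernel and invoking \eqref{est:norme1derH}, the price is precisely the factor $(t/\e)^{-1/2}$, which blows up as $t\to0^+$; then $\sup_{0<t<T}\|\partial_x\bar c(t)\|_{L^\infty(\G)}=\infty$ for every $T>0$, no finite radius $R_2$ works, and the self-map and contraction steps collapse (feeding a bound of order $s^{-1/2}$ for $\partial_xc(s)$ into the Duhamel term for $\bar u$ gives $\int_0^t(t-s)^{-1/2}s^{-1/2}\,ds=\pi$, which is bounded but not small as $T\to0$). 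The cure is to move the derivative onto the datum, i.e.\ to prove $\partial_y\bigl(H(t\e^{-1})*c^0\bigr)=-\bigl(H(t\e^{-1})*\partial_xc^0\bigr)$, which yields the non-singular bound $\kappa_0^{-1}K\|\partial_xc^0\|_{L^\infty(\G)}$ and is the reason the theorem assumes $c^0\in W^{1,\infty}(\G)$ in the first place. On a network this identity is not a one-line integration by parts: the paper proves it by embedding $\G$ into the doubled oriented network $2\G$ and checking that the vertex boundary terms cancel, using the continuity of $H$ and of $c^0$ across the nodes. Your sentence conflates the two options --- it carries both the singular factor and the $W^{1,\infty}$ norm --- so as written the key uniform-in-time bound on $\partial_x\bar c$ is not established.

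A second, smaller point: the claim that the fixed point $u$ satisfies \eqref{transu} and lies in $C^0(\G)$ because ``the Duhamel integrals inherit'' property (iv) of Theorem \ref{th:R} is too quick, since the Duhamel term in \eqref{u2} involves $\partial_xH$, not $H$, and no continuity-in-$y$ at the vertices is asserted for $\partial_xH(t,x,\cdot)$. The paper handles this in a separate third step: one integrates by parts edge by edge to rewrite the Duhamel term as $-H(t-s)*\partial_x(u\partial_xc)$ plus vertex contributions, and then uses the continuity of $u$ and the Kirchhoff condition \eqref{transc} for $c$ to show that those vertex contributions vanish; the continuity of $u$ on $\G$ and the condition \eqref{transu} are then read off from the resulting representation.
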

\begin{proof}
For $u_0$ given, $A:=\|u^0\|_{L^\infty(\G)}$, $K:=\sup_{t>0,y\in\G}\|H(t,\cdot,y)\|_{L^1(\G))}>1$ and $T>0$ to be chosen later, let
\[
B:=\{u\in L^\infty((0,T)\times\G):\,u(0,y)=u^0(y) \text{ and }\,\sup_{0\le t<T}\|u(t)\|_{L^\infty(\G)}\le K\,A+1\}
\]
and $d(u_1,u_2):=\sup_{0\le t<T}\|u_1(t)-u_2(t)\|_{L^\infty(\G)}$.

Next, for $u\in B$ fixed, $c_0$ given and $c$ defined through $u$ by \eqref{c2}, we define on $B$ the map
\beq\label{mapPsi}
\Psi(u)(t,y):=(H(t)*u^0)(y)+\int_0^t (\pd_xH(t-s)*(u(s)\pd_xc(s)))(y)ds\,,\qquad (t,y)\in(0,T)\times\G\,.
\eeq
Since $(B,d)$ is a non empty complete metric space, we shall prove the claimed local existence using the Banach fixed point theorem.

{\sl First step : $\Psi(B)\subset B$. } From \eqref{mapPsi} we have
\beq\label{est:mapPsi}
|\Psi(u)(t,y)|\le K\,A+\kappa_0^{-2}\sup_{0\le s<T}\|u(s)\|_{L^\infty(\G)}\int_0^t \|\partial_xH(t-s,\cdot,y)\|_{L^1(\G)}\|\pd_x c(s)\|_{L^\infty(\G)}ds\,,\quad y\in\G\,.
\eeq
Next, owing to the property $\partial_yH(t,x,y)=-\partial_xH(t,x,y)$, by \eqref{c2} we get for any $y\in\G$
\beq\label{est:gradc}
\begin{split}
\partial_yc(t,y)&=e^{-{(\alpha/\e)}t}(\partial_yH(t\,\e^{-1})*c^0)(y)+\f1\e\int_0^te^{- ( \alpha/\e) (t-s)}(\partial_yH((t-s)\e^{-1})*u(s))(y)ds\\
&=-e^{-{(\alpha/\e)}t}(\partial_xH(t\,\e^{-1})*c^0)(y)+\f1\e\int_0^te^{- ( \alpha/\e) (t-s)}(\partial_yH((t-s)\e^{-1})*u(s))(y)ds\,.
\end{split}
\eeq
Furthermore, we observe that embedding the non-oriented network $\G$ into the oriented one \break$(V,\{e^\pm_j;j=1,\dots,m\})$, denoted by $2\G$, by construction the fundamental solution $H$ of the heat equation \eqref{heat2} on $\G$ is also solution on $2\G$ and $\int_{2\G}H(t,x,y)f(x)dx=2\int_{\G}H(t,x,y)f(x)dx$, for any $f$ integrable on $\G$ (and so on $2\G$). Therefore, for any $y\in\G$,
\[
\begin{split}
\int_\G\partial_x&H(t\,\e^{-1},x,y)c^0(x)\,dx=\f12\int_{2\G}\partial_xH(t\,\e^{-1},x,y)c^0(x)\,dx\\
&=\f12\sum_{j=1}^m\kappa(e_j)\int_0^1\partial_\xi \tilde H_j(t\,\e^{-1},\xi,\eta)\tilde c^0_j(\xi)\,d\xi
+\f12\sum_{j=1}^m\kappa(e_j)\int_0^1\partial_\xi \tilde H_j(t\,\e^{-1},1-\xi,\eta)\tilde c^0_j(1-\xi)\,d\xi\\
&=\f12\sum_{j=1}^m\kappa(e_j)[\tilde H_j(t\,\e^{-1},\xi,\eta)\tilde c^0_j(\xi)]_0^1-\f12\sum_{j=1}^m\kappa(e_j)\int_0^1\tilde H_j(t\,\e^{-1},\xi,\eta)\partial_\xi\tilde c^0_j(\xi)\,d\xi\\
&\quad+\f12\sum_{j=1}^m\kappa(e_j)[\tilde H_j(t\,\e^{-1},1-\xi,\eta)\tilde c^0_j(1-\xi)]_0^1+\f12\sum_{j=1}^m\kappa(e_j)\int_0^1\tilde H_j(t\,\e^{-1},1-\xi,\eta)\partial_\xi\tilde c^0_j(1-\xi)\,d\xi\\
&=-(H(t\,\e^{-1})*\partial_xc^0)(y)\,,
\end{split}
\]
and \eqref{est:gradc} becomes
\[
\partial_yc(t,y)=-e^{-{(\alpha/\e)}t}(H(t\,\e^{-1})*\partial_xc^0)(y)+\f1\e\int_0^te^{- ( \alpha/\e) (t-s)}(\partial_yH((t-s)\e^{-1})*u(s))(y)ds\,.
\]
%So that, by \eqref{est:norme1H} and \eqref{est:normeinftyH}
%\beq\label{est:terminpartialc}
%|\int_\G\partial_xH(t,x,y)c^0(x)\,dx|\le C(1+t^{-1/2})\max_{i=1,\dots,n;\ j\in E(v_i)}|c^0_j(v_i)|+K\|\partial_xc^0\|_{L^\infty(\G)}\,.
%\eeq
Using \eqref{est:norme1H} we arrive at the following estimate for the spatial derivative of $c$
\[
|\pd_y c(t,y)|\le \kappa_0^{-1}K\|\partial_xc^0\|_{L^\infty(\G)}+(\e\,\kappa_0)^{-1}\sup_{0\le s<T}\|u(s)\|_{L^\infty(\G)}\int_0^t\|\partial_yH((t-s)\e^{-1},\cdot,y)\|_{L^1(\G)}ds\,,\qquad y\in\G\,,
\]
and by \eqref{est:norme1derH}
\beq\label{est:gardc}
\|\pd_y c(t)\|_{L^\infty(\G)}\le \kappa_0^{-1}K\|\partial_xc^0\|_{L^\infty(\G)}+C(K\,A+1)(\e^{-1}t+\e^{-\f12}t^{\f12})\,,
\eeq
where $C>0$ does not depend on $\e$. Finally, plugging \eqref{est:gardc} into \eqref{est:mapPsi} and using the decaying properties of $H$ again, we get for $t\in(0,T)$
\[
\begin{split}
\|\Psi(u)(t)\|_{L^\infty(\G)}&\le K\,A+C(K\,A+1)\int_0^t(1+(t-s)^{-1/2})\|\pd_x c(s)\|_{L^\infty(\G)}ds\\
&\le K\,A+\widetilde C(t+t^{1/2})(1+\e^{-1}t+\e^{-\f12}t^{\f12})\,,
\end{split}
\]
where $\widetilde C=\widetilde C(K,A,\G,\|\pd_x c^0\|_{L^\infty(\G)})$. Therefore, for $T=T(\|u^0\|_{L^\infty(\G)},\|\partial_xc^0\|_{L^\infty(\G)},\e)>0$ sufficiently small, it holds
\[
\sup_{0\le t<T}\|\Psi(u)(t)\|_{L^\infty(\G)}\le K\,A+1\,.
\]
To obtain the claim, we also observe that $\Psi(u)(0,y)=u^0(y)$ since by definition $H(0)*u^0=u^0$.

{\sl Second step : $\Psi$ is a contraction map on $B$.}
Let $u_1,u_2\in B$. By \eqref{mapPsi} and arguing as in the previous step, we get for all $(t,y)\in(0,T)\times\G$
\beq\label{est:diffPsi}
\begin{split}
|\Psi(u_1)-\Psi(u_2)|(t,y)&\le\int_0^t|\pd_xH(t-s)*[(u_1-u_2)\pd_x c_1+u_2(\pd_xc_1-\pd_xc_2)](s)|(y)ds\\
&\le d(u_1,u_2)\kappa_0^{-2}\int_0^t \|\partial_xH(t-s,\cdot,y)\|_{L^1(\G)}\|\pd_x c_1(s)\|_{L^\infty(\G)}ds\\
&\quad +(K\,A+1)\kappa_0^{-2}\int_0^t \|\partial_xH(t-s,\cdot,y)\|_{L^1(\G)}\|(\pd_x c_1-\pd_xc_2)(s)\|_{L^\infty(\G)}ds\,,
\end{split}
\eeq
and for all $t\in(0,T)$
\beq\label{est:diffc}
\|(\pd_yc_1-\pd_yc_2)(t)\|_{L^\infty(\G)} \le C\,d(u_1,u_2)(\e^{-1}t+\e^{-\f12}t^{\f12})\,.
\eeq
Plugging \eqref{est:gardc} and \eqref{est:diffc} into \eqref{est:diffPsi} and using \eqref{est:norme1derH}, we arrive at
\[
\|(\Psi(u_1)-\Psi(u_2))(t)\|_{L^\infty(\G)}\le \widetilde C\,d(u_1,u_2)(t+t^{1/2})(1+\e^{-1}t+\e^{-\f12}t^{\f12})\,.
\]
Hence,  for $T$ sufficiently small again, $\Psi$ is a contraction on $B$.

{\sl Third step : conclusion.}   By the previous steps, it follows that there exists a unique fixed point $u\in B$ of $\Psi$ and that $(u,c)$ satisfies the integral system  \eqref{u2}-\eqref{c2}. Furthermore, $c$ is continuous on $\G$ and satisfies the transmission condition \eqref{transc}  because $H$ is continuous on $\G$ and satisfies the same condition. Again because of the regularity of $H$, $u$ is differentiable on $\G$ and $c$ is twice differentiable. Consequently, performing an integration by part on each edge in the second term of the r.h.s. of \eqref{u2}, $u$ can be also written as
\[
\begin{split}
u(t,y)=(H(t)*u^0)(y)&+\int_0^t\sum_{i=1}^nH(t-s,v_i,y)\sum_{j\in E(v_i)}\kappa(e_j)u_j(s,v_i)\f{\partial c_j}{\partial n}(s,v_i)\,ds\\
&-\int_0^t(H(t-s)*\pd_x(u(s)\pd_xc(s)))(y)ds\,,
\end{split}
\]
implying that $u(t)\in C^0(\G)$ holds true for all $t\in(0,T)$. Finally, the continuity of $u$ together with \eqref{transc} gives that
\[
u(t,y)=(H(t)*u^0)(y)-\int_0^t(H(t-s)*\pd_x(u(s)\pd_xc(s)))(y)ds\,.
\]
So that $u$ satisfies \eqref{transu} and the proof is complete.
\end{proof}

We conclude this section showing the existence of a classical solution of system \eqref{KSunet}-\eqref{contc} in $(0,T)$ for any $T>0$, i.e. we do not exclude that the solution blow-up for $T\to +\infty$. More specifically, we shall prove the following.
\begin{theorem}[Global existence and positivity]\label{th:KSglobalexist}
Under the hypothesis of Theorem \ref{th:KSlocalexist}, for all $T>0$ there exists a solution $(u,c)$ of the Keller-Segel system on the time interval $[0,T]$. Moreover, if the initial data $u^0$ and $c^0$ are nonnegative, the solution $(u,c)$ is nonnegative.
\end{theorem}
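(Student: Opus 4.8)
The plan is to bootstrap the local solution from Theorem \ref{th:KSlocalexist} into a global one by deriving \emph{a priori} $L^\infty$-bounds on $u$ and $\partial_x c$ that depend only on $T$, the data, and the network, so that the local existence time $T(\|u(t_0)\|_{L^\infty(\G)},\|\partial_x c(t_0)\|_{L^\infty(\G)},\e)$ stays bounded below as $t_0$ ranges over $[0,T)$; a standard continuation argument then shows the solution extends to $[0,T]$. First I would establish positivity, since it feeds into the bounds. Writing $u$ via the integrated-by-parts formula from the end of the previous proof, namely $u(t,y)=(H(t)*u^0)(y)-\int_0^t(H(t-s)*\partial_x(u(s)\partial_x c(s)))(y)\,ds$, one cannot invoke positivity of $H$ directly because $H$ need not be positive on the network. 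Instead I would pass to the PDE form \eqref{KSunet}--\eqref{transc} (legitimate by the regularity of $H$, as noted in the proof of Theorem \ref{th:KSlocalexist}) and argue by a maximum-principle / parabolic-comparison argument on $\G$: the continuity conditions \eqref{contu}, \eqref{contc} and the Kirchhoff conditions \eqref{transu}, \eqref{transc} are exactly what is needed to rule out a negative interior minimum and a negative minimum at a vertex (at a vertex the sum of outgoing normal derivatives vanishes, so a strict negative minimum there is impossible). Writing the $u$-equation as $\partial_t u = \partial_{yy}u - \partial_y c\,\partial_y u - u\,\partial_{yy}c$ and using $\e\,\partial_t c=\partial_{yy}c+u-\alpha c$ to express $\partial_{yy}c$, one gets a linear parabolic inequality for $u$ with bounded coefficients (on any interval where the local solution exists), and the comparison principle gives $u\ge0$; then $c\ge0$ follows from \eqref{c2} together with $u\ge0$ and the fact that $\int_\G H(t,x,y)\,dy=1$ and—more delicately—a sign argument, or again directly from the maximum principle for the $c$-equation with nonnegative source $u$ and nonnegative datum $c^0$.

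Next, with $u\ge0$ and mass conservation \eqref{consmass} in hand, I would derive the global bound on $c$ and $\partial_x c$. From \eqref{c2}, $\|c(t)\|_{L^\infty(\G)}\le e^{-(\alpha/\e)t}\|H(t\e^{-1})*c^0\|_{L^\infty(\G)}+\e^{-1}\int_0^t e^{-(\alpha/\e)(t-s)}\|H((t-s)\e^{-1})*u(s)\|_{L^\infty(\G)}\,ds$; here the difficulty is that $\|H(\tau)*u(s)\|_{L^\infty(\G)}$ is controlled by \eqref{est:normeinftyH} as $C_2(1+\tau^{-1/2})\|u(s)\|_{L^1(\G)}=C_2(1+\tau^{-1/2})M$, which is integrable in $\tau$ near $0$, so one obtains $\|c(t)\|_{L^\infty(\G)}\le C(M,c^0,\e)(1+t^{1/2})$, a bound growing at most polynomially in $t$ and hence finite on $[0,T]$. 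For $\partial_x c$ I would use the identity derived in the proof of Theorem \ref{th:KSlocalexist}, $\partial_y c(t,y)=-e^{-(\alpha/\e)t}(H(t\e^{-1})*\partial_x c^0)(y)+\e^{-1}\int_0^t e^{-(\alpha/\e)(t-s)}(\partial_y H((t-s)\e^{-1})*u(s))(y)\,ds$, and estimate the second term by the $L^\infty$--$L^1$ bound \eqref{est:norme1derH} on $\partial_y H$ against $\|u(s)\|_{L^1(\G)}=M$: $\|\partial_y H(\tau)*u(s)\|_{L^\infty(\G)}\le C_3(1+\tau^{-1/2})M$, again integrable near $0$. This yields $\|\partial_x c(t)\|_{L^\infty(\G)}\le C(M,c^0,\e)(1+t^{1/2})$ on $[0,T]$, \emph{independently of} $\|u(t)\|_{L^\infty(\G)}$ — this is the crucial point, because it decouples the $c$-estimate from the (possibly large) sup-norm of $u$.

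Finally I would close the loop on $u$. Plugging the bound $\|\partial_x c(s)\|_{L^\infty(\G)}\le C(1+s^{1/2})$ into \eqref{u2} and using \eqref{est:norme1derH}:
\[
\|u(t)\|_{L^\infty(\G)}\le C_1\|u^0\|_{L^\infty(\G)}+\kappa_0^{-2}\int_0^t C_3(1+(t-s)^{-1/2})\,\|u(s)\|_{L^\infty(\G)}\,\|\partial_x c(s)\|_{L^\infty(\G)}\,ds\,,
\]
which, since $(1+(t-s)^{-1/2})$ is integrable and $\|\partial_x c(s)\|_{L^\infty(\G)}$ is bounded on $[0,T]$, is a Gronwall-type inequality with a weakly singular kernel. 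A singular Gronwall lemma (or iteration to remove the $(t-s)^{-1/2}$ singularity) then gives $\|u(t)\|_{L^\infty(\G)}\le C(M,u^0,c^0,\e,T)$ for all $t\in[0,T]$. With both $\|u(t)\|_{L^\infty(\G)}$ and $\|\partial_x c(t)\|_{L^\infty(\G)}$ bounded on $[0,T)$, the local existence time is bounded below uniformly, so the solution continues up to $T$; since $T$ was arbitrary we get global existence, and positivity is preserved on each extension. I expect the main obstacle to be the positivity argument: because $H$ is not a priori positive, one must work with the PDE formulation and carefully justify the maximum principle on the network with the Kirchhoff transmission conditions — in particular checking that the coefficients arising after substituting $\partial_{yy}c$ are bounded on the local existence interval and that the vertex conditions genuinely preclude a negative minimum there. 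The second, more routine, obstacle is handling the weak singularities $(t-s)^{-1/2}$ in the Gronwall step, which the estimates of Proposition \ref{prop:proprietaH} are precisely designed to make integrable.
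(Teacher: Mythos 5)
There is a genuine gap at the step you yourself flag as ``the crucial point.'' You claim that
\[
\|\partial_y H(\tau)*u(s)\|_{L^\infty(\G)}\le C_3\,(1+\tau^{-1/2})\,\|u(s)\|_{L^1(\G)}=C_3\,(1+\tau^{-1/2})\,M\,,
\]
invoking \eqref{est:norme1derH}. This is a H\"older mismatch: \eqref{est:norme1derH} bounds $\sup_y\|\partial_yH(\tau,\cdot,y)\|_{L^1(\G)}$, and an $L^1$ bound on the kernel pairs with $\|u(s)\|_{L^\infty(\G)}$, not with $\|u(s)\|_{L^1(\G)}$. To pair with the mass $M$ you need the $L^\infty$ bound on the kernel, which by \eqref{est:normeinftydevH} is $C_4(1+\tau^{-1})$ --- \emph{not} integrable at $\tau=0$. (This is the expected $L^1\to L^\infty$ rate for a gradient of a one-dimensional heat kernel; a $\tau^{-1/2}$ rate against the $L^1$ norm would be too good to be true.) Consequently your bound $\|\partial_x c(t)\|_{L^\infty(\G)}\le C(M,c^0,\e)(1+t^{1/2})$, decoupled from $\|u\|_{L^\infty}$, does not follow; with the correct pairing the $c$-gradient estimate reintroduces $\sup_s\|u(s)\|_{L^\infty(\G)}$, your final inequality for $\|u(t)\|_{L^\infty(\G)}$ becomes superlinear, and the singular Gronwall step no longer closes. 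This is precisely why the uniform gradient bound on $c$ is only obtained in the paper for the parabolic--elliptic case (Section \ref{Sec:KSNPE}), where the elliptic equation plus positivity and mass conservation are used pointwise; no such bound is claimed for $\e>0$.

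The paper's actual proof of global existence for $\e>0$ is much more modest: it does not produce $T$-uniform a priori bounds at all, but simply iterates the local existence theorem, using that the local solution stays in the ball $\|u(t)\|_{L^\infty}\le KA+1$ up to the local time and restarting from there, the theorem being only existence on an arbitrary finite $[0,T]$ with possible blow-up as $T\to+\infty$. For positivity of $u$ the paper does not use a pointwise maximum principle but a Stampacchia-type functional $\int_\G\phi(u)$ with $\phi$ supported on $\{z<0\}$ and $0\le\phi''(z)z^2\le C\phi(z)$; this only requires $\partial_yc\in L^\infty$, which the local theory provides, whereas your comparison argument additionally needs $\partial_{yy}c\in L^\infty$ and a semilinear network maximum principle with drift --- feasible via \cite{JvB2} but more demanding than what you acknowledge. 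Your fallback for $c\ge0$ (maximum principle with nonnegative source) is the paper's argument; your first suggestion via \eqref{c2} and $\int_\G H\,dy=1$ fails since $H$ is not positive, as you note.
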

\begin{proof}
The global existence result is obtained by extending the local in time solution obtained in Theorem \ref{th:KSlocalexist}. Indeed, let $T_{max}$ be the maximal time of existence of the obtained local solution. Then, the limits as $t\to T_{max}^-$ of $u$ and $c$ exist  and depend  only on $\|u^0\|_{L^\infty(\G)}$, $\|\partial_xc^0\|_{L^\infty(\G)}$ and $\e$. Therefore, it is possible to extend $(u(t),c(t))$ behind $T_{max}$, iteratively as many time as it is necessary to reach $T>0$.

In order obtain the positivity of the solution $(u,c)$ when the initial data are positive, we analyze the time evolution of $\int_\G\phi(u(t,y))\,dy$, where $\phi$ is a smooth function on $\R$ such that $\phi(z)>0$ if $z<0$, $\phi(z)=0$ if $z\ge0$ and there exists $C>0$ such that $0\le\phi''(z)z^2\le C\phi(z)$, for all $z\in\R$. Owing to \eqref{KSunet} and to the Kirchhoff conditions \eqref{transu} and \eqref{transc}, we have for any $\delta>0$
\[
\begin{split}
\f d{dt}\int_\G\phi(u(t,y))\,dy&=\sum_{j=1}^m\kappa(e_j)\int_0^1\phi'(\tilde u_j(t,\eta))(\partial_{\eta\eta}\tilde u_j-\partial_\eta(\tilde u_j\partial_\eta\tilde c_j))(t,\eta)d\eta\\
&\le-\sum_{j=1}^m\kappa(e_j)\int_0^1\phi''(\tilde u_j)(\partial_\eta\tilde u_j)^2(t,\eta)d\eta\\
&\qquad+\sum_{j=1}^m\kappa(e_j)\|\partial_\eta\tilde c_j(t)\|_{L^\infty(0,1)}\int_0^1\phi''(\tilde u_j)|\tilde u_j||\partial_\eta\tilde u_j|(t,\eta)d\eta\\
&\le(\f\delta2-1)\int_\G\phi''(u)(\partial_yu)^2(t,y)dy+\f{\kappa_0^{-1}}{2\,\delta}\|\partial_yc(t)\|^2_{L^\infty(\G)}\int_\G\phi''(u)u^2(t,y)\,dy\,.
\end{split}
\]
Choosing $\delta<2$, by the properties of $\phi$ we get the differential inequality
\[
\f d{dt}\int_\G\phi(u(t,y))\,dy\le \f{C\,\kappa_0^{-1}}{2\,\delta}\|\partial_yc(t)\|^2_{L^\infty(\G)}\int_\G\phi(u(t,y))\,dy.
\]
Applying the Gronwall lemma, we obtain that $\phi(u(t,y))=0$, so that $u(t,y))\ge0$.

The positivity of $c$ does not follow from \eqref{c2}, since $H$ is not a priori positive, as observed before. Instead, it follows from the maximum principle for parabolic equations on network \cite{JvB2}, taking also into account that $u$ is positive.
\end{proof}

\begin{remark}[Energy]
  As for the euclidian case, solutions of the Keller-Segel system \eqref{KS} on $\G$ that satisfy the continuity and transmission conditions \eqref{transu}-\eqref{contc}, satisfy also the energy dissipation equation
\beq\label{energydissipation}
\f d{dt}\mathcal E(u(t),c(t))=-\int_\G u(t,x)|\partial_x(\log u-c)|^2(t,x)\,dx-\e\int_\G(\partial_tc(t,x))^2dx\,,
\eeq
where $\mathcal E$ is the usual free energy associated to the Keller-Segel system, i.e.
\[
\mathcal E(u,v):=\int_\G u\log u\,dx-\int_\G u\,c\,dx+\f12\int_\G|\partial_x c|^2dx+\f\alpha2\int_\G c^2\,dx\,.
\]
In particular, the global solution of Theorem \ref{th:KSglobalexist} satisfies \eqref{energydissipation}.
\end{remark}
%%%%%%%%%%%%%%%%%%%%%%%%%%%%%%%%%%%%%%%%%%%%
\section{The parabolic-elliptic Keller-Segel system on the network}
\label{Sec:KSNPE}
%%%%%%%%%%%%%%%%%%%%%%%%%%%%%%%%%%%%%%%%%%%%
We shall consider in this section the parabolic-elliptic system \eqref{KS}, i.e. $\e=0$, not only for the sake of completeness, but also to put in evidence the different behaviour of the two systems on a network: in contrast with the parabolic-parabolic case of the previous section, here positives solutions can not exhibit blow-up as $t\to+\infty$.

The integral solution $(u,c)$ in \eqref{u2}-\eqref{c2}, reads now as
\beq\label{u3}
u(t,y)=(H(t)*u^0)(y)+\int_0^t (\pd_xH(t-s)*(u(s)\pd_xc(s)))(y)ds\,,\qquad\qquad y\in\G\,,
\eeq
where $c$ is the weak solution of the elliptic equation on $\G$
\beq\label{c3}
\begin{array}{ll}
-\pd_{yy}c_j=u_j-\alpha\,c_j\qquad\qquad\qquad&  y\in  e_j\,,\ j=1,\dots,m\,,\\
\quad c_j(t,v_i)=c_k(t,v_i) &j,k\in E(v_i)\,,\ i=1,\dots,n\,,\\
\ \displaystyle\sum_{j\in  E(v_i)} \kappa(e_j)\f{\partial c_j}{\partial n}(t,v_i)=0,& i=1,\dots,n\,.
\end{array}
\eeq

Elliptic equations on networks are studied in \cite{N3}. Following \cite{N3}, we can state the existence result below. We sketch some ideas of the proof in the Appendix for the reader's convenience.
\begin{proposition}\label{prop:elliptic}
Let $\alpha>0$. Given $z\in L^\infty(\G)$, the elliptic problem
\beq\label{elliptic}
\begin{array}{ll}
- w''_j=z_j-\alpha\, w_j\qquad\qquad\qquad&  x\in  e_j\,,\ j=1,\dots,m,\\
\quad w_j(v_i)=w_k(v_i) &j,k\in E(v_i)\,,\ i=1,\dots,n\,,\\
\quad\sum_{j\in  E(v_i)} \kappa(e_j)\f{\partial w_j}{\partial n}(v_i)=0,& i=1,\dots,n\,,
\end{array}
\eeq
admits a unique weak solution $w\in H^1(\G)$. Moreover $w\in W^{1,\infty}(\G)$ and there exists $c=c(\G,\alpha)>0$ such that
\begin{align}
&c(\G,\alpha)\inf_\G\{z\}\le w(x)\le c(\G,\alpha)\sup_\G\{z\}\,,\qquad\qquad\forall\ x\in\G\,,\label{maxprinc}\\
&\quad\quad\|w'\|_{L^\infty(\G)}\le\|z\|_{L^\infty(\G)}\,.\label{estderiv}
\end{align}
Finally if $z\in C^0(\G)$, then $w\in C^2(\G):=\{u\in C^0(\G):\, \tilde u_j\in C^2((0,1)),\,j=1,\dots,m\}$.
\end{proposition}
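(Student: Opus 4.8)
The plan is to establish existence and uniqueness by the Lax--Milgram theorem applied in the Hilbert space $H^1(\G)$, and then to bootstrap the regularity and the pointwise bounds from the weak formulation. First I would fix the bilinear form
\[
a(w,\phi):=\sum_{j=1}^m\kappa(e_j)\int_0^1\tilde w'_j\tilde\phi'_j\,d\xi+\alpha\sum_{j=1}^m\kappa(e_j)\int_0^1\tilde w_j\tilde\phi_j\,d\xi\,,\qquad w,\phi\in H^1(\G)\,,
\]
and the linear functional $\ell(\phi):=\int_\G z\,\phi\,dx$. Testing \eqref{elliptic} against $\phi\in H^1(\G)$, integrating by parts on each edge and summing, the boundary contributions collapse to $\sum_i\phi(v_i)\sum_{j\in E(v_i)}\kappa(e_j)\partial_n w_j(v_i)=0$ by the Kirchhoff condition, so the natural weak formulation is $a(w,\phi)=\ell(\phi)$ for all $\phi\in H^1(\G)$. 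Since $\alpha>0$, the form $a$ is coercive and bounded on $H^1(\G)$ (with the norm $\|w\|_{H^1(\G)}^2=\sum_j\kappa(e_j)\|\tilde w_j\|_{H^1(0,1)}^2$), and $\ell$ is continuous because $z\in L^\infty(\G)\subset L^2(\G)$; hence Lax--Milgram gives a unique weak solution $w\in H^1(\G)$, with $\|w\|_{H^1(\G)}\le C\|z\|_{L^2(\G)}$.

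Next I would upgrade the regularity. On each edge the weak equation reads $-\tilde w_j''=\tilde z_j-\alpha\tilde w_j$ in the distributional sense, and the right-hand side lies in $L^\infty(0,1)$ (since $\tilde w_j\in H^1(0,1)\hookrightarrow C([0,1])$ and $\tilde z_j\in L^\infty$); integrating twice gives $\tilde w_j\in W^{2,\infty}(0,1)\subset C^1([0,1])$, so the one-sided normal derivatives at the endpoints are well defined and the Kirchhoff identity \eqref{elliptic} holds pointwise. In particular $w\in W^{1,\infty}(\G)$. If moreover $z\in C^0(\G)$, then each $\tilde z_j$ is continuous, hence $\tilde w_j''$ is continuous and $\tilde w_j\in C^2([0,1])$, giving $w\in C^2(\G)$.

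For the maximum principle \eqref{maxprinc} I would use a truncation (Stampacchia) argument together with the strong maximum principle on edges. Let $m_-:=\inf_\G z$; consider $v:=(w-\mu)^-$ for a suitable constant $\mu\le c(\G,\alpha)m_-$. Since $w\in C^0(\G)$, $v\in H^1(\G)$ and is an admissible test function; testing $a(w,v)=\ell(v)$ and using $\alpha\int_\G(w-\mu)v=\alpha\int_\G((w-\mu)^+-(w-\mu)^-)v=-\alpha\int_\G v^2$ together with $\int_\G w'v'=-\int_\G|v'|^2$, one gets $\int_\G|v'|^2+\alpha\int_\G v^2=-\int_\G(z-\alpha\mu)v$; choosing $\mu$ so that $z-\alpha\mu\ge0$ a.e. (possible once $\mu\le m_-/\alpha$, and this is compatible with $\mu\le c(\G,\alpha)m_-$ for an appropriate $c$) forces the right-hand side $\le0$ while the left-hand side is $\ge0$, whence $v\equiv0$, i.e. $w\ge\mu$. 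The upper bound is symmetric. Here one must be slightly careful when $m_-<0$: the cleanest route is to let $\bar w$ solve $-\bar w''=\alpha\bar w$ with Kirchhoff conditions and constant right-hand side $\|z\|_{L^\infty}$ — that is, compare with the solution for constant data $\pm\|z\|_{L^\infty}$, which by linearity is an explicit constant multiple of the solution for data $\equiv1$ — and this provides the constant $c(\G,\alpha)$ uniformly. The gradient bound \eqref{estderiv} then follows by writing, on each edge, $\tilde w_j'(\xi)=\tilde w_j'(\xi_0)+\int_{\xi_0}^\xi(\alpha\tilde w_j-\tilde z_j)$; choosing $\xi_0$ a point where $\tilde w_j'$ vanishes (it must, since $\tilde w_j\in C^1$ on a compact interval and — by the maximum principle — attains interior extrema or, if monotone, one combines the endpoint values via the Kirchhoff balance), and using $\alpha|w|\le\|z\|_{L^\infty}$ from \eqref{maxprinc} together with a careful tracking of constants, yields $\|w'\|_{L^\infty(\G)}\le\|z\|_{L^\infty(\G)}$.

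The main obstacle I anticipate is the sharp constant in \eqref{estderiv} and the handling of the Kirchhoff condition in the maximum-principle step: unlike the interval case there is no single endpoint at which one can read off the extremum, so the sign analysis must be done globally on $\G$, exploiting that at any interior vertex the incoming normal derivatives are weighted-balanced to zero, which prevents a global extremum from sitting at a vertex unless $w$ is locally constant there. Getting the constant in \eqref{estderiv} to be exactly $1$ (rather than merely $C(\G,\alpha)$) requires choosing the base point $\xi_0$ optimally on each edge and is the delicate point; everything else is the standard elliptic machinery transplanted edge by edge, and for the details I refer to \cite{N3}.
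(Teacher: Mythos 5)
Your overall architecture matches the paper's: Lax--Milgram in $H^1(\G)$ for existence and uniqueness, edge-by-edge bootstrap for regularity and for the $C^2$ statement, and a maximum principle for \eqref{maxprinc}. Where the paper simply cites \cite{N3} for \eqref{maxprinc}, your Stampacchia truncation with $v=(w-\mu)^-$ is a clean self-contained substitute; note that your worry about $\inf_\G z<0$ is unfounded, since taking $\mu=\alpha^{-1}\inf_\G z$ makes $z-\alpha\mu\ge0$ a.e.\ regardless of sign and directly yields \eqref{maxprinc} with $c(\G,\alpha)=\alpha^{-1}$. Up to this point the proposal is fine.

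The genuine gap is in your derivation of \eqref{estderiv}. Your argument needs a base point $\xi_0$ on each edge at which $\tilde w_j'(\xi_0)=0$, and such a point need not exist: on an edge joining two vertices of degree $\ge 2$ the solution can be strictly monotone, since the Kirchhoff condition only constrains the \emph{weighted sum} of the normal derivatives at a vertex and forces no individual $\partial w_j/\partial n$ to vanish (only at degree-one vertices does it reduce to a Neumann condition). Your fallback (``combine the endpoint values via the Kirchhoff balance'') is not an argument, because the balance at a vertex couples derivatives of \emph{different} edges and gives no bound on any single one. The paper closes this differently, and you should too: from the equation and \eqref{maxprinc} one gets $\|w''\|_{L^\infty(\G)}\le C\|z\|_{L^\infty(\G)}$, and then an elementary interpolation (``Morrey''-type) inequality on each unit interval, $\|f'\|_{L^\infty(0,1)}\le 2\|f\|_{L^\infty(0,1)}+\|f''\|_{L^\infty(0,1)}$, finishes the proof. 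Equivalently, replace your vanishing point by the mean value theorem: there is $\xi_0$ with $|\tilde w_j'(\xi_0)|=|\tilde w_j(1)-\tilde w_j(0)|\le 2\|\tilde w_j\|_{L^\infty}$, after which your integration of $w''$ goes through. Be aware that neither route literally produces the constant $1$ claimed in \eqref{estderiv} (one only gets $\|w'\|_{L^\infty(\G)}\le C(\G,\alpha)\|z\|_{L^\infty(\G)}$, since $|z-\alpha w|\le 2\|z\|_{L^\infty}$ in general); the paper's own proof is equally cavalier on this point, so do not spend effort chasing the sharp constant.
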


Our main result of this section is the following global existence theorem.
\begin{theorem}[Global existence]\label{th:elliptic globalexistence}
Let $\e=0$, $\alpha>0$ and assume $u^0\in L^\infty(\G)$, $u^0\ge0$. Then, system \eqref{u3}-\eqref{c3} has a global in time integral-weak solution $(u,c)$ such that $u\ge0$, $c\ge0$ and
\[
u\in L^\infty((0,\infty);C^0(\G))\,,\quad c\in L^\infty((0,\infty);C^2(\G))\,.
\]
Moreover, the conservation of mass \eqref{consmass} and the transmission conditions \eqref{transu} and \eqref{transc} are also satisfied.
\end{theorem}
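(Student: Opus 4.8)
The plan is to combine three ingredients: (i) a local existence result obtained by a Banach fixed-point argument analogous to Theorem \ref{th:KSlocalexist}, but now with $c$ reconstructed from $u$ through the elliptic solver of Proposition \ref{prop:elliptic} rather than through the kernel $H(t\e^{-1})$; (ii) the positivity argument already used in Theorem \ref{th:KSglobalexist}; and (iii) an $L^\infty$ a priori bound on $u$ that, unlike the doubly parabolic case, does not blow up as $t\to\infty$, so the local solution extends to all times. First I would set up the fixed-point map: for $u$ in a ball $B\subset L^\infty((0,T)\times\G)$ centred at $H(t)*u^0$, define $c(t)$ as the unique weak solution of \eqref{c3} with datum $z=u(t)$, which by Proposition \ref{prop:elliptic} lies in $W^{1,\infty}(\G)$ with $\|\partial_x c(t)\|_{L^\infty(\G)}\le\|u(t)\|_{L^\infty(\G)}$ and $\|c(t)\|_{L^\infty(\G)}\le c(\G,\alpha)\|u(t)\|_{L^\infty(\G)}$; then set $\Psi(u)(t,y)=(H(t)*u^0)(y)+\int_0^t(\partial_xH(t-s)*(u(s)\partial_x c(s)))(y)\,ds$. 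Using $\|\partial_xH(t-s,\cdot,y)\|_{L^1(\G)}\le C_3(1+(t-s)^{-1/2})$ from \eqref{est:norme1derH} together with the elliptic gradient bound, one gets $\|\Psi(u)(t)\|_{L^\infty}\le K A + \widetilde C (K A+1)^2 (t+t^{1/2})$ and, for the difference, $\|\Psi(u_1)-\Psi(u_2)\|\le \widetilde C (K A+1)(t+t^{1/2})\,d(u_1,u_2)$ (here the Lipschitz dependence of $c$ on $u$ is linear and explicit from \eqref{elliptic}); so for $T$ small $\Psi$ is a contraction, giving a unique local integral-weak solution. The same boundary integration-by-parts as at the end of Theorem \ref{th:KSlocalexist} shows $u(t)\in C^0(\G)$ and that \eqref{transu} holds, and if $u^0\in C^0(\G)$, $u^0\ge0$ then the scalar $\phi$-Gronwall computation of Theorem \ref{th:KSglobalexist} (which used only \eqref{KSunet} and the Kirchhoff conditions) gives $u\ge0$, whence by \eqref{maxprinc} also $c\ge0$; and $u\in C^0(\G)$ combined with Proposition \ref{prop:elliptic} gives $c\in C^2(\G)$.

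The heart of the matter, and the step I expect to be the main obstacle, is the time-uniform a priori bound on $\|u(t)\|_{L^\infty(\G)}$. The clean way is to mimic the Euclidean parabolic-elliptic argument: test \eqref{KSunet} against $p\,u^{p-1}$ (more precisely, against $\tilde u_j^{p-1}$ edge by edge) and use the continuity of $u$ and the Kirchhoff conditions \eqref{transu}, \eqref{transc} to kill all vertex contributions, obtaining
\[
\frac{d}{dt}\int_\G u^p\,dx = -\frac{4(p-1)}{p}\int_\G |\partial_x(u^{p/2})|^2\,dx + (p-1)\int_\G u^p\,\partial_{xx}c\,dx .
\]
Now substitute $-\partial_{xx}c = u - \alpha c$ from \eqref{c3}; the crucial sign is that $-(p-1)\int_\G u^p\,\partial_{xx}c = (p-1)\int_\G u^{p+1}\,dx - \alpha(p-1)\int_\G u^p c\,dx$, and since $c\ge0$ and $u\ge0$ the $\alpha$-term has a favourable sign, leaving $\frac{d}{dt}\int_\G u^p \le -\tfrac{4(p-1)}{p}\int_\G|\partial_x(u^{p/2})|^2 + (p-1)\int_\G u^{p+1}$. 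Wait — that last term has the wrong sign for a Moser iteration on a network where Gagliardo–Nirenberg holds with the same exponents as on a bounded interval of $\R$. One should instead use the $\alpha c$ term: note $\int_\G u^p c\,dx \ge 0$, but more usefully, from \eqref{c3} one also has $\int_\G c\,dx = \alpha^{-1}\int_\G u\,dx = M/\alpha$, so $c$ is controlled in $L^1$; combined with $\|c\|_{W^{1,\infty}}\le C\|u\|_{L^\infty}$ this is not yet enough. The correct route, and what I would actually carry out, is the standard one: write $(p-1)\int_\G u^{p+1} = (p-1)\int_\G u\cdot u^p$ and interpolate $\|u^{p/2}\|_{L^{2(p+1)/p}}$ between $L^2$ and $H^1$ via the one-dimensional Gagliardo–Nirenberg inequality on $\G$ (valid edgewise, with the network $H^1$-norm, because $\G$ is a finite metric graph), absorb the $H^1$-seminorm into the dissipation term, and close an ODE $\frac{d}{dt}y_p \le A_p + B_p\,y_p^{\theta_p}$ with $\theta_p<1$ asymptotically; iterating in $p$ and using mass conservation \eqref{consmass} as the $p=1$ anchor yields a bound on $\|u(t)\|_{L^\infty(\G)}$ that is uniform in $t\in(0,\infty)$ (indeed in $d=1$ the critical-mass phenomenon is absent, so no smallness of $M$ is needed). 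The network-specific point to verify carefully here is that all the vertex boundary terms generated by the integrations by parts vanish: this is exactly where continuity of $u$ and of $u^{p/2}$ together with the Kirchhoff flux conditions \eqref{transu} and \eqref{transc} enter, and it is the only place the graph structure is really used.

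With the a priori bound in hand the extension is routine: by Proposition \ref{prop:elliptic} and the local theory, the solution can be continued as long as $\|u(t)\|_{L^\infty(\G)}$ stays finite, the continuation time depending only on the current value of $\|u(t)\|_{L^\infty(\G)}$; since that norm is bounded uniformly on $[0,T_{\max})$ for every finite $T_{\max}$, no finite-time blow-up occurs and the solution is global. Mass conservation \eqref{consmass} follows from \eqref{int H} exactly as recorded after \eqref{c2}, and the regularity statements $u\in L^\infty((0,\infty);C^0(\G))$, $c\in L^\infty((0,\infty);C^2(\G))$ follow from the local regularity already established together with the uniform bound. I would present the a priori estimate as a separate lemma (Moser–Alikakos iteration on $\G$) and then assemble the theorem in a few lines.
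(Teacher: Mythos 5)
Your local-existence, positivity and continuation steps match the paper's. But at the crucial a priori estimate you take a genuinely different route, and as sketched it has a gap. The paper does \emph{not} substitute $\partial_{yy}c=\alpha c-u$ into the $L^p$ identity; it keeps the cross term and bounds it by Young's inequality, arriving at \eqref{est:A} with the coefficient $\frac12 p(p-1)\|\partial_yc(t)\|^2_{L^\infty(\G)}$. The heart of the paper's proof is then a purely elliptic, network-specific lemma that you considered and discarded too quickly: from \eqref{c3} one gets not merely $\|c\|_{W^{1,\infty}}\le C\|u\|_{L^\infty}$ (Proposition \ref{prop:elliptic}), but the much stronger bound $\|c(t)\|_{W^{1,\infty}(\G)}\le C(\alpha,\kappa_0)M$, i.e.\ control by the \emph{mass} alone, uniformly in time. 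This is obtained by integrating the second-order ODE for $\tilde c_j$ along each edge: positivity of $u$ and $c$ together with $\alpha\int_\G c=M$ give $|\partial_\eta\tilde c_j(t,\eta)-\partial_\eta\tilde c_j(t,0)|\le \kappa_0^{-1}M$, and two further integrations pin down $\partial_\eta\tilde c_j(t,0)$ itself in terms of $M$. With \eqref{stimaderivata} in hand, \eqref{est:A} becomes a \emph{linear} differential inequality with the dissipation term $-\frac{2(p-1)}{p}\int_\G|\partial_y u^{p/2}|^2$ intact, and the Alikakos iteration \cite{AL} delivers the uniform-in-time $L^\infty$ bound directly. Nothing like a Gagliardo--Nirenberg absorption of $\int_\G u^{p+1}$ is needed.

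The concrete gap in your version is the closing ODE. After substituting the equation you must control $(p-1)\int_\G u^{p+1}$, and you propose to absorb it into the dissipation and arrive at $\frac{d}{dt}y_p\le A_p+B_p y_p^{\theta_p}$ with $\theta_p<1$. That inequality only yields $y_p(t)\lesssim (1+t)^{1/(1-\theta_p)}$, i.e.\ polynomial growth; it proves no finite-time blow-up but \emph{not} the claimed $u\in L^\infty((0,\infty);C^0(\G))$. To get a bound uniform on $(0,\infty)$ you would additionally need to extract a damping term $-\delta_p y_p$ from the dissipation via a Poincar\'e--Nash type inequality on $\G$ (controlling $\|u^{p/2}\|_{L^2}$ by $\|\partial_y u^{p/2}\|_{L^2}$ plus $\|u^{p/2}\|_{L^1}$), and perform the absorption of $\int_\G u^{p+1}$ without destroying that term; none of this is in your sketch. (There is also a sign slip: after integrating by parts, the cross term is $-(p-1)\int_\G u^p\partial_{yy}c$, not $+(p-1)\int_\G u^p\partial_{yy}c$ as displayed --- you recover the correct expression in the prose, but the displayed identity is wrong.) Your route can very likely be repaired along Hillen--Potapov lines, but the paper's two-step argument --- mass-only $W^{1,\infty}$ bound on $c$, then Alikakos --- is both shorter and gives the time-uniform statement for free.
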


\begin{proof}
The local existence result can be proved as in Theorem \ref{th:KSlocalexist}, simply replacing the estimate \eqref{est:gardc} with the estimate \eqref{estderiv}.

It is worth emphasizing that the nonnegativity of the initial data $u_0$ is not necessary for the local existence. On the other hand, if $u^0\ge0$, then the positivity of the solution can be proved as for the corresponding result in Theorem \ref{th:KSglobalexist}.

To prove the global existence of a nonnegative solution $(u,c)$, we extend to the network some classical  estimates on the $L^p$ norms of the solutions of the Keller-Segel system on $\R^d$. By multiplying \eqref{KSunet} for $u^{p-1}$, $p>1$, and integrating over $\G$ we get
\[
\begin{split}
\f d{dt}\int_\G u^p dy&=\sum_{j=1}^m\kappa(e_j)\frac{d}{d t}\int_0^1\tilde u^p_j\,d\eta
=p\sum_{j=1}^m\kappa(e_j)\int_0^1\tilde u^{p-1}_j[\partial_{\eta\eta}\tilde u_j-\partial_\eta(\tilde u_j\partial_\eta\tilde c_j)]d\eta\\
&=p\sum_{i=1}^n\sum_{j\in E(v_i)}\kappa(e_j)u^{p-1}_j(t,v_i)[\f{\partial u_j}{\partial n}-u_j\f{\partial c_j}{\partial n}](t,v_i)\\
&\qquad-4\f{(p-1)}p\int_\G|\partial_yu^{p/2}|^2dy+2(p-1)\int_\G u^{p/2}\partial_yu^{p/2}\partial_yc\,dy\,.
\end{split}
\]
Using the continuity of $u$, the transmission conditions \eqref{transu} and \eqref{transc} and the H$\ddot{\text{o}}$lder inequality, we obtain
\beq\label{est:A}
\f d{dt}\int_\G u^p dy\le-2\f{(p-1)}p\int_\G|\partial_yu^{p/2}|^2dy+\f12\,p(p-1)\|\partial_yc(t)\|_{L^\infty(\G)}^2\int_\G u^p dy\,.
\eeq

Next, we shall estimate $\|\pd_y c(t)\|_{L^\infty(\G)}$ uniformly in time. By \eqref{elliptic}, we immediately get that $\tilde c\in C^2(0,1)$, for $j=1,\dots,m$. Integrating over $\G$ the equation on $c$ and observing that
\[
\int_\G \pd_{yy}c(t,y)\, dy =\sum_{j=1}^m\kappa(e_j)\int_0^1\pd_{\eta\eta}\tilde c_j(t,\eta)\,d\eta=\sum_{i=1}^n \sum\limits_{j\in  E(v_i)}\kappa(e_j) \f{\partial c_j}{\partial n}(t,v_i)=0\,,
\]
we get, by the  mass conservation property for $u$\,,
\beq\label{PSK3}
\alpha\int_\G c(t,y)dy=\int_\G u(t,y)dy=\int_\G u^0(y)dy=M\,.
\eeq
Integrating now the equation on $c_j$ over $[0,\eta]$, $\eta\in[0,1]$, we have for all $j=1,\dots, m$,
\[
\pd_\eta\tilde c_j(t,\eta)=\pd_\eta\tilde c_j(t,0) +\int_0^\eta [\alpha\,\tilde c_j(t,\sigma)-\tilde u_j(t,\sigma)]d\sigma\,,
\]
where $\pd_\eta\tilde c_j(t,0)$ is the internal derivative. Thus, since $u$ and $c$ are nonnegative, using \eqref{PSK3}, we obtain for $y=\Pi^\pm_j(\eta)$ and $j=1,\dots,m$,
\beq\label{PSK4}
\pd_y c_j(t,\Pi^\pm_j(0)) -\kappa_0^{-1}M\le\pd_y c_j(t,y)\le\pd_y c_j(t,\Pi^\pm_j(0))+\kappa_0^{-1}M\,.
\eeq
Hence, in order to bound $\|\pd_y c(t)\|_{L^\infty(\G)}$ it is sufficient to bound $\pd_y c(t)$ at the vertices $v_i$ of $\G$.

Integrating again the l.h.s. inequality in \eqref{PSK4} over $[0,y]$, $y\in e_j$, and using the positivity of $c$, we get
\[
y\,\pd_y c_j(t,\Pi^\pm_j(0)) -y\,\kappa_0^{-1}M\le c_j(t,y)\,,
\]
and after one more integration over $[0,1]$ we arrive at
\[
\f12\pd_y c_j(t,\Pi^\pm_j(0))-\f12\kappa_0^{-1}M\le\int_0^1\tilde c_j(t,\eta)\,d\eta\le(\alpha\,\kappa_0)^{-1}M\,,
\]
i.e.
\beq\label{upperderivatac}
\max_{i=1,\dots,n}\max_{j\in E(v_i)}\pd_y c_j(t,v_i)\le C_1(\alpha,\kappa_0)M\,.
\eeq
Plugging \eqref{upperderivatac} into the r.h.s. of \eqref{PSK4}, we arrive at the uniform in time upper bound for $\partial_yc(t)$
\beq\label{upperderivatac2}
\partial_yc(t)\le C_2(\alpha,\kappa_0)M\,.
\eeq

On the other hand, integrating twice as before the r.h.s. inequality in \eqref{PSK4}, we obtain
\beq\label{lowerderivatac}
-c_j(t,\Pi^\pm_j(0))-\f12\kappa_0^{-1}M\le \f12\pd_y c_j(t,\Pi^\pm_j(0))\,.
\eeq
Furthermore, thanks to \eqref{PSK3} and \eqref{upperderivatac2}, we observe that for all $j=1,\dots, m$ and $y=\Pi^\pm_j(\eta)\in\overline e_j$, it holds
\beq\label{Linftyboundc}
0\le c_j(t,y)=\int_0^1\tilde c_j(t,\sigma)\,d\sigma+\int_0^1\int_\sigma^\eta \pd_z\tilde c_j(t,z)dz\, d\sigma\le C_3(\alpha,\kappa_0)M\,.
\eeq
Plugging \eqref{Linftyboundc} into \eqref{lowerderivatac}, we arrive at a uniform in time lower bound for $\pd_y c_j(t,\Pi^\pm_j(0))$, $j=1,\dots,m$, giving a uniform in time lower bound for $\partial_yc(t)$ from the l.h.s. of \eqref{PSK4}. Resuming, we have obtained the existence of a constant $C(\alpha,\kappa_0)>0$ such that, in the maximal time interval of existence of the solution, it holds
\beq\label{stimaderivata}
\|c(t)\|_{W^{1,\infty}(\G)}\le C(\alpha,\kappa_0)M\,.
\eeq
To conclude, we replace \eqref{stimaderivata} into  \eqref{est:A} and we apply the iterative method introduced in \cite{AL} to get an uniform in time estimate of $\|u(t)\|_{L^\infty(\G)}$. The global existence of the nonnegative solution $(u,c)$ follows by the classical continuation in time argument.
\end{proof}
%%%%%%%%%%%%%%%%%%%%%%%%%%%%%%%%%%%%%%%%%%%%
\appendix
%%%%%%%%%%%%%%%%%%%%%%%%%%%%%%%%%%%%%%%%%%%%
\section{Appendix A: the heat kernel on networks}
\label{Appendix:A}
%%%%%%%%%%%%%%%%%%%%%%%%%%%%%%%%%%%%%%%%%%%%
This Appendix is devoted to the construction of the fundamental solution \eqref{def:H}-\eqref{def:L} of the heat equation \eqref{heat} on a weighted graph.

Let $\G$ be the considered weighted network. We recall that the edges $e_j$ are open. Let $x$ be a fixed point say of~$e_1$.  Let $y$ be  an arbitrary point of $\G\setminus V$ and $\pm a_j$, $j=1,\dots,m$, the two arcs containing~$y$. We consider the following function
\beq\label{H1}
H(t,x,y)=\delta_{1,j}\kappa^{-1}(e_1)G(t,d(x,y))+G(t,d(y,I(a_j))\star\phi_{a_j}(t,x)+G(t,d(y,I(-a_j))\star\phi_{-a_j}(t,x)\,,
\eeq
where $\phi_{a_j}$ and $\phi_{-a_j}$ are continuous functions with respect to $t\in[0,\infty)$, associated to the arc $\pm a_j$ respectively and to be determined for all $j=1,\dots,m$. Moreover, for functions $f$ and $g$ defined on $\R_+$, the $\star$ operator is defined as following
\[
(g\star f)(t)=\int_0^tg(s)f(t-s)ds\,,\qquad t\ge0\,.
\]

The following technical lemma will be useful in the sequel, (see \cite[Lemmas 1 and 2]{R}).
\begin{lemma}\label{teclem}
The following identities hold true
\begin{itemize}
\item[(i)] $\f\partial{\partial z}(G(t,z)\star f)_{|_{z=0^+}}=-\f12\,f(t)$ , if $f\in C^0([0,\infty))$ ;
\item[(ii)] $\f {z_1}t\,G(t,z_1)\star G(t,z_2)=G(t,z_1)$, for all $z_1\ne0$ and $z_2\in\R$ ;
\item[(iii)] $\f {z_1}t\,G(t,z_1)\star \f {z_2}t\,G(t,z_2)=\f {z_1+z_2}t\,G(t,z_1+z_2)$, for all $z_1,z_2\ne0$ .
\end{itemize}
\end{lemma}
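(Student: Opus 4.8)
The plan is to treat the three identities separately: (i) via an approximate-identity argument based on the explicit form of $\pd_z G$, and (ii)--(iii) by passing to the Laplace transform in the time variable, which turns the operation $\star$ into an ordinary product. The only genuinely delicate point will be (i); (ii)--(iii) become bookkeeping once the relevant transform formulas are recorded.

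For (i) I would start from $\pd_z G(s,z)=-\f{z}{2s}\,G(s,z)$ and write $u(t,z):=(G(t,z)\star f)(t)=\int_0^t G(s,z)\,f(t-s)\,ds$. Differentiating under the integral sign (legitimate for $z$ in compact subsets of $(0,\infty)$, where $\f{z}{2s}G(s,z)$ is dominated uniformly in $s$ by an integrable function) gives $\pd_z u(t,z)=-\int_0^t p_z(s)\,f(t-s)\,ds$ with $p_z(s):=\f{z}{2s}G(s,z)\ge0$. The key computation is that the change of variable $\sigma=z^2/(4s)$ yields $\int_0^t p_z(s)\,ds=\f1{2\sqrt\pi}\int_{z^2/(4t)}^{\infty}\sigma^{-1/2}e^{-\sigma}\,d\sigma$, which tends to $\f1{2\sqrt\pi}\int_0^\infty\sigma^{-1/2}e^{-\sigma}\,d\sigma=\tfrac12$ as $z\to0^+$, while $\int_\da^t p_z(s)\,ds\to0$ as $z\to0^+$ for each fixed $\da\in(0,t)$; so $p_z$ is an approximate identity concentrating at $s=0$ and carrying total mass $\tfrac12$. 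Since $f$ is continuous at $t$ and bounded on $[0,t]$, decomposing $f(t-s)=f(t)+(f(t-s)-f(t))$ and splitting the integral at $\da$ gives $\int_0^t p_z(s)f(t-s)\,ds\to\tfrac12 f(t)$, hence $\pd_z u(t,z)|_{z=0^+}=-\tfrac12 f(t)$. (Alternatively, one forms the one-sided difference quotient at $z=0$, uses dominated convergence for continuity of $z\mapsto u(t,z)$ on $[0,h]$ and the mean value theorem on $(0,h)$, reaching the same limit.)

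For (ii) and (iii) I would invoke the classical Laplace transforms (for $p>0$, $z\neq0$)
\[
\int_0^\infty e^{-pt}G(t,z)\,dt=\f1{2\sqrt p}\,e^{-|z|\sqrt p}\,,\qquad
\int_0^\infty e^{-pt}\,\f{z}{t}\,G(t,z)\,dt=\sgn(z)\,e^{-|z|\sqrt p}\,,
\]
which follow from the formula for $\int_0^\infty t^{\nu-1}e^{-pt-q/t}\,dt$ with $\nu=\tfrac12,-\tfrac12$ and $q=z^2/4$ (the second also from $\f{z}{t}G(t,z)=-2\pd_z G(t,z)$ and differentiating the first; probabilistically, for $z>0$ this is the first-passage-time density of level $z$ by the process with generator $\pd_{zz}$, so (ii)--(iii) are instances of the strong Markov property). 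Since the Laplace transform sends $\star$ to multiplication, the left-hand side of (ii) transforms to $\sgn(z_1)\,e^{-|z_1|\sqrt p}\cdot\f1{2\sqrt p}\,e^{-|z_2|\sqrt p}$ and that of (iii) to $\sgn(z_1)\sgn(z_2)\,e^{-(|z_1|+|z_2|)\sqrt p}$. For the arguments occurring in the construction of $H$ the $z_i$ are nonnegative distances, so $z_1+z_2$ has the common sign of $z_1$ and $z_2$, and these expressions equal $\f1{2\sqrt p}\,e^{-|z_1+z_2|\sqrt p}$ and $\sgn(z_1+z_2)\,e^{-|z_1+z_2|\sqrt p}$, i.e. the Laplace transforms of $G(t,z_1+z_2)$ and of $\f{z_1+z_2}{t}G(t,z_1+z_2)$; in particular the right-hand side of (ii) is $G(t,z_1)$ precisely when $z_2=0$. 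Both sides of each identity being continuous in $t>0$ with Laplace transform convergent for all $p>0$, injectivity of the Laplace transform (Lerch's theorem) yields the claimed equalities. (One may also prove (ii)--(iii) directly by completing the square in the exponent, using $\f{z_1^2}{s}+\f{z_2^2}{t-s}=\f{(z_1+z_2)^2}{t}+\f{(z_1(t-s)-z_2 s)^2}{s(t-s)t}$ and an elementary substitution, but the Laplace route is cleaner.)

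The main obstacle is (i): one must justify passing to the limit $z\to0^+$ inside $\int_0^t p_z(s)f(t-s)\,ds$, where the kernel $p_z$ appears singular as $z\to0$, and one must correctly account for the factor $\tfrac12$, which comes from integrating (half of) a full-line Gaussian derivative over the half-line $s>0$ only — equivalently, from the fact that $\pd_z(G(t,z)\star f)|_{z=0^+}$ is a one-sided derivative. Steps (ii)--(iii) are routine once the two transform formulas are granted; there the only care needed is the sign bookkeeping, which the Laplace image makes transparent.
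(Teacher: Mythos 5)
Your proof is correct, but there is nothing in the paper to compare it against: Lemma \ref{teclem} is stated without proof and attributed to Roth \cite[Lemmas 1 and 2]{R}, so your write-up supplies an argument the paper deliberately omits. Your treatment of (i) --- writing $\pd_z(G(t,z)\star f)=-\int_0^t\f z{2s}G(s,z)f(t-s)\,ds$ and showing via the substitution $\sigma=z^2/4s$ that the kernel $\f z{2s}G(s,z)$ is an approximate identity concentrating at $s=0$ with total mass tending to $\f12$ --- is complete, and it correctly isolates where the one-sided factor $\f12$ comes from; the dominated-convergence and mean-value remarks close the gap between $\lim_{z\to0^+}\pd_z u(t,z)$ and the one-sided derivative at $0$. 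The Laplace-transform route to (ii)--(iii) is also sound (the transform formulas you quote are the standard ones, and injectivity applies to these continuous kernels), and it has the additional merit of exposing two inaccuracies in the statement as printed. First, the right-hand side of (ii) should read $G(t,z_1+z_2)$; it equals $G(t,z_1)$ only when $z_2=0$, which is in fact the only case ever used in Appendix \ref{Appendix:A}, where (ii) serves to write $G(t,z_1)\star\phi=\f{z_1}tG(t,z_1)\star G(t,0)\star\phi$ with $G(t,0)=\f1{\sqrt{4\pi t}}$. Second, both (ii) and (iii) require $z_1$ and $z_2$ to have the same sign: for $z_1=1$, $z_2=-1$ the left-hand side of (iii) is $-\f2t\,G(t,2)$ while the right-hand side is $0$. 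This restriction is harmless, since every argument occurring in the construction of $H$ is a nonnegative distance, and your proof, carried out for nonnegative $z_1,z_2$, establishes exactly the identities the appendix needs. The alternative direct verification you sketch, based on $\f{z_1^2}s+\f{z_2^2}{t-s}=\f{(z_1+z_2)^2}t+\f{(z_1(t-s)-z_2s)^2}{s(t-s)t}$, is also valid and closer in spirit to Roth's original computation.
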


It is easily seen that $H(\cdot,x,\cdot)$ is a solution of the heat equation on $(0,\infty)\times\G\setminus V$. Moreover, denoting $\theta_{\pm j}(t,x)$ the function $G(t,0)\star\phi_{\pm a_j}(t,x)$ and using Lemma \ref{teclem} (ii), it is easy to see that, for all $y\in\G\setminus V$, the function $H$ can be written also as
\beq\label{H2}
\begin{split}
H(t,x,y)=\delta_{1,j}\kappa^{-1}(e_1)G(t,d(x,y))&+t^{-1}d(y,I(a_j))\,G(t,d(y,I(a_j))\star\theta_{+j}(t,x)\\
&+t^{-1}d(y,I(-a_j))\,G(t,d(y,I(-a_j))\star\theta_{-j}(t,x)\,.
\end{split}
\eeq
Introducing the vector function $\Theta(t,x)=(\theta_{-1},\theta_{+1},\dots,\theta_{-m},\theta_{+m})^\top(t,x)$ and the $1\times2m$ matrix function $\Psi_j(t,y)=(\psi_{-1,j},\psi_{+1,j},\dots,\psi_{-m,j},\psi_{+m,j})(t,y)$ such that $\psi_{\pm l,j}=0$ if $l\ne j$ and
\[
\psi_{\pm j,j}(t,y)=t^{-1}d(y,I(\pm a_j))\,G(t,d(y,I(\pm a_j))\,,
\]
\eqref{H2} reads as
\beq\label{H3}
H(t,x,y)=\delta_{1,j}\kappa^{-1}(e_1)G(t,d(x,y))+\Psi_j(t,y)\star\Theta(t,x)\,.
\eeq

Next, for any fixed $x\in e_1$, we shall determine $\Theta(\cdot,x)$ in such a way that $H$ satisfies the following conditions on each ramifications nodes $v\in V$,
\begin{enumerate}
\item{\sl continuity condition} : there exists $b(t,x,v)\in\R$ such that $\displaystyle\lim_{y\in a_j\to v}H(t,x,y)=b(t,x,v)$, $j\in E(v)$;
\item{\sl transmission condition} : $\sum_{j\in E(v)}\kappa(e_j)\f\partial{\partial n}H(t,x,v)=0$.
\end{enumerate}
As a consequence, the resulting function $H$ shall satisfy all the properties $(i)$-$(vii)$ in Theorem \ref{th:R}.

Let $v\in V$ be an arbitrary fixed node. Because of the invariance of \eqref{H1} with respect to the parametrizations of $\pm a_j$, we can assume that $I(a_j)=v=T(-a_j)$, for all $j\in E(v)$, without loss of generality. Then, computing with respect to the parametrizaton of $a_j$ and passing to the limit $y\to v$ in \eqref{H1}, the continuity condition reads as
\beq\label{cont1}
H(t,x,I(a_j))=\delta_{1,j}\kappa^{-1}(e_1)G(t,d(x,I(a_j)))+G(t,0)\star\phi_{a_j}(t,x)+G(t,1)\star\phi_{-a_j}(t,x)=b(t,x,v)\,.
\eeq
By Lemma \ref{teclem} (ii), it holds
\[
G(t,1)\star\phi_{-a_j}(t,x)=\f1tG(t,1)\star\theta_{-j}(t,x)\,,
\]
and \eqref{cont1} can be written in term of the thetas functions as
\beq\label{cont2}
\delta_{1,j}\kappa^{-1}(e_1)G(t,d(x,I(a_j)))+\theta_{+j}(t,x)+\f1tG(t,1)\star\theta_{-j}(t,x)=b(t,x,v)\,,\qquad j\in E(v)\,.
\eeq
Multiplying \eqref{cont2} by $\kappa(e_j)$ and summing over all $j\in E(v)$, we get
\beq\label{eq:theta1}
\sum_{j\in E(v)}\delta_{1,j}G(t,d(x,I(a_j)))+\sum_{j\in E(v)}\kappa(e_j)\theta_{+j}(t,x)+\sum_{j\in E(v)}\kappa(e_j)\f1tG(t,1)\star\theta_{-j}(t,x)=b(t,x,v)\sum_{j\in E(v)}\kappa(e_j).
\eeq

On the other hand, according to the definition of the exterior normal derivatives and to Lemma~\ref{teclem}~(i), it holds true that
\[
\begin{split}
\f\partial{\partial n}H(t,x,I(a_j))=&-\delta_{1,j}\kappa^{-1}(e_1)\f1{2t}\,d(x,I(a_j))\,G(t,d(x,I(a_j)))\\
&\quad+\f12\phi_{a_j}(t,x)-\f1{2t}\,G(t,1)\star\phi_{-a_j}(t,x)\,,\qquad\qquad j\in E(v)\,.
\end{split}
\]
Again, by Lemma \ref{teclem} (ii), we easily obtain the identity
\beq\label{transm1}
\f\partial{\partial n}H(t,x,I(a_j))\star\f1{\sqrt{\pi t}}=-\delta_{1,j}\kappa^{-1}(e_1)G(t,d(x,I(a_j)))+\theta_{+j}(t,x)-\f1tG(t,1)\star\theta_{-j}(t,x)\,,\quad j\in E(v)\,.
\eeq
Since the transmission condition at the node $v$ is equivalent to the condition
\[
\sum_{j\in E(v)}\kappa(e_j)\f\partial{\partial n}H(t,x,v)\star\f1{\sqrt{\pi t}}=0\,,
\]
multiplying \eqref{transm1} by $\kappa(e_j)$ and summing over all $j\in E(v)$, we get
\beq\label{eq:theta2}
\sum_{j\in E(v)}\kappa(e_j)\theta_{+j}(t,x)=\sum_{j\in E(v)}\delta_{1,j}G(t,d(x,I(a_j)))+\sum_{j\in E(v)}\kappa(e_j)\f1tG(t,1)\star\theta_{-j}(t,x)\,.
\eeq
Plugging \eqref{eq:theta2} into \eqref{eq:theta1}, we have
\[
2\sum_{j\in E(v)}\delta_{1,j}G(t,d(x,I(a_j)))+2\sum_{j\in E(v)}\kappa(e_j)\f1tG(t,1)\star\theta_{-j}(t,x)=b(t,x,v)\sum_{j\in E(v)}\kappa(e_j)\,.
\]
The latter identity gives us the value $b(t,x,v)$ in term of the thetas functions, for any node $v\in V$. Defining $\kappa(v):=\sum_{j\in E(v)}\kappa(e_j)$ and plugging the obtained expression for $b(t,x,v)$ into \eqref{cont2}, we arrive at the identity
\[
\begin{split}
\delta_{1,j}\kappa^{-1}(e_1)G(t,d(x,I(a_j)))&+\theta_{+j}(t,x)+\f1tG(t,1)\star\theta_{-j}(t,x)\\
&=\f2{\kappa(v)}\sum_{i\in E(v)}\delta_{1,i}G(t,d(x,I(a_i)))+\f2{\kappa(v)}\sum_{i\in E(v)}\kappa(e_i)\f1tG(t,1)\star\theta_{-i}(t,x)\,,
\end{split}
\]
i.e. for any $j\in E(v)$ it holds
\beq\label{eq:theta3}
\begin{split}
\theta_{+j}(t,x)=&(\f{2\kappa(e_1)}{\kappa(v)}-1)\delta_{1,j}\,\kappa^{-1}(e_1)G(t,d(x,I(a_j)))+\sum_{i\in E(v),i\ne j}\f{2\kappa(e_i)}{\kappa(v)}\delta_{1,i}\,\kappa^{-1}(e_i)G(t,d(x,I(a_i)))\\
&+(\f{2\kappa(e_j)}{\kappa(v)}-1)\f1tG(t,1)\star\theta_{-j}(t,x)+\sum_{i\in E(v),i\ne j}\f{2\kappa(e_i)}{\kappa(v)}\f1tG(t,1)\star\theta_{-i}(t,x)\,.
\end{split}
\eeq
Taking into account that the sum of the first two terms in \eqref{eq:theta3} is not identically zero iff $1\in E(v)$ and using the transfert/reflection coefficients \eqref{def:trancoef}, \eqref{eq:theta3} can be written as
\beq\label{eq:theta4}
\theta_{+j}(t,x)=\epsilon_{(-a_1\to a_j)}\kappa^{-1}(e_1)G(t,d(x,I(a_1)))+\sum_{i\in E(v)}\epsilon_{(-a_i\to a_j)}\f1tG(t,1)\star\theta_{-i}(t,x)\,,\quad j\in E(v)\,.
\eeq

Now it is worth noticing that changing the roles between $a_j$ and $-a_j$ and making all the above computations with respect to the parametrization of $-a_j$, gives us that formula \eqref{eq:theta4} also holds true for $\theta_{-j}(t,x)$, i.e.
\beq\label{eq:theta5}
\theta_{-j}(t,x)=\epsilon_{(a_1\to-a_j)}\kappa^{-1}(e_1)G(t,d(x,I(-a_1)))+\sum_{i\in E(v)}\epsilon_{(a_i\to -a_j)}\f1tG(t,1)\star\theta_{+i}(t,x)\,,\quad j\in E(v)\,.
\eeq

Let denote the two arcs $\pm a_i$, supported by $e_i$ as $\pm i$ and the transfer/reflection coefficients $\epsilon_{(\pm a_i\to\pm a_j)}$ as $\epsilon_{\pm i,\pm  j}$. We recall that $\epsilon_{\pm i,\pm  j}\ne0$ iff $T(\pm i)=I(\pm j)$. Let denote ${\cal I}$ the ordered set $\{-1,1,\dots,-m,m\}$. Then, \eqref{eq:theta4}-\eqref{eq:theta5} reads both as
\[
\theta_l(t,x)=\kappa^{-1}(e_1)[\epsilon_{-1,l}G(t,d(x,I(1)))+\epsilon_{1,l}G(t,d(x,I(-1)))]+\sum_{k\in{\cal I}}\epsilon_{k,l}\f1tG(t,1)\star\theta_{k}(t,x)\,,\quad l\in{\cal I}\,,
\]
i.e. the function $\Theta$ satisfies the $2m$ system
\beq\label{2msystem}
\Theta(t,x)=\Lambda_1(t,x)+{\cal T}(t)\star \Theta(t,x)\,,
\eeq
where $\Lambda_1(t,x)$ is the $2m$ vector of components
\[
\lambda_l(t,x)=\kappa^{-1}(e_1)[\epsilon_{-1,l}G(t,d(x,I(1)))+\epsilon_{1,l}G(t,d(x,I(-1)))]\,,\qquad l\in{\cal I}\,,
\]
and ${\cal T}(t)$ is the $2m$ squared matrix ${\cal T}(t)=\f1tG(t,1)((\epsilon_{k,l})_{k,l\in{\cal I}})^\top$.

The solution of system \eqref{2msystem} is given by
\beq\label{Theta}
\Theta(t,x)=\Lambda_1(t,x)+\sum_{n=1}^\infty{\cal T}(t)^{\star\,n}\star\Lambda_1(t,x)\,,
\eeq
where ${\cal T}(t)^{\star\,n}$ is the iterated convolution $\underbrace{{\cal T}(t)\star\cdots\star{\cal T}(t)}_{n-times}$. Plugging \eqref{Theta} into \eqref{H3} we obtain for $x\in e_1$ and $y\in e_j$
\beq\label{H4}
H(t,x,y)=\delta_{1,j}\kappa^{-1}(e_1)G(t,d(x,y))+ \Psi_j(t,y)\star\Lambda_1(t,x)+\sum_{n=1}^\infty \Psi_j(t,y)\star{\cal T}(t)^{\star\,n}\star\Lambda_1(t,x)\,.
\eeq
Let observe that the term $\Psi_j(t,y)\star\Lambda_1(t,x)$ in \eqref{H4} is not equal to 0 iff $(\pm 1,\pm j)$ is a path. In the same way, the first term of the series in \eqref{H4} is the sum of $\psi_{\pm j,j}(t,y)\star{\cal T}(t)_{\pm j,l}\star\lambda_l(t,x)$ over all $l\in{\cal I}$ and that each of these terms is not equal to 0 iff $(\pm1,l,\pm j)$ forms a path. Reasoning iteratively and using Lemma \ref{teclem} (ii)-(iii) to compute the nonzero terms in the right hand side of  in \eqref{H4}, we arrive finally to the expression
\[
H(t,x,y)=\delta_{1,j}\,\kappa^{-1}(e_1)G(t,d(x,y))+\sum_{k\ge \rho(x,y)}\sum_{C\in C_{k+2}(x,y)}\kappa^{-1}(e_1)\,\epsilon(C)\,G(t,d_C(x,y))\,.
\]
Since initially $x$ is an arbitrarily fixed point of $e_1$, the previous formula holds true also for any $x\in e_i$, $i=1,\dots,m$, and by continuity for any $x$ vertex of $\G$, giving \eqref{def:H}-\eqref{def:L}.
%%%%%%%%%%%%%%%%
\section{Appendix B: Proof of Propositions \ref{prop:proprietaH} and \ref{prop:elliptic}}
\label{Appendix:B}
%%%%%%%%%%%%%%
\begin{proof}[Proof of Proposition \ref{prop:proprietaH}]
We shall prove \eqref{int H} first for any fixed $x\in\G\setminus V$. By the continuity of $H$ on $\G$, \eqref{int H} holds true for all $x\in\G$. Moreover, since $\int_\G H(t,x,y)dy$ is continuous with respect to $t$ and $\partial_t\int_\G H(t,x,y)dy=\int_\G\partial_{yy}H(t,x,y)dy=0$, it is enough to prove that $\lim_{t\to0^+}\int_\G H(t,x,y)dy=1$.

To begin, we have
\beq\label{intH}
\int_\G H(t,x,y)dy=\int_0^1G(t,|\xi-\eta|)d\eta+I(t,x)=1-\int_{\R\setminus(0,1)}G(t,|\xi-\eta|)d\eta+I(t,x)\,,
\eeq
where $\xi=(\Pi^\pm_i)^{-1}(x)$ and $\eta=(\Pi^\pm_j)^{-1}(y)$. The limit as $t\to0^+$ of the second term in the r.h.s. of \eqref{intH} is obviously 0. Concerning the remainder $I$, for any $x\in e_i$ fixed, there exists $\delta\in(0,1)$ such that $\xi\in(\delta,1-\delta)$. Then, we have
\[
|I|=|\sum_{j=1}^m\f{\kappa(e_j)}{\kappa(e_i)}\int_0^1\sum_{k\ge \rho(x,y)}\sum_{C\in C_{k+2}(x,y)}\epsilon(C)\,G(t,d_C(x,y))d\eta|
\le m\,\kappa_1\kappa_0^{-1}\sum_{k=0}^{+\infty}\f{e^{-(\delta^2+k^2)/4t}}{\sqrt{\pi\,t}}(\overline\epsilon\,m)^{k+1}\,,
\]
so that $\lim_{t\to0^+} I(t,x)=0$\,.

Next, let us notice that the function $L(t,x,y)$  in \eqref{def:L} is not a priori positive (because of $\epsilon(C)$, see \eqref{def:trancoef}-\eqref{def:epsC}), and so $H(t,x,y)$ too. Therefore,  \eqref{int H} does not give \eqref{est:norme1H}. However, it follows by \eqref{def:H} and \eqref{est:L} with $t\le1$ that
\beq\label{est:H}
|H(t,x,y)|\le K\,t^{-\f12}[1+\sum_{k=0}^{+\infty}(\overline\epsilon\,m)^k\,e^{-k^2/4}]=K\,t^{-\f12}\,,\qquad\forall (x,y)\in\G\times\G\,.
\eeq
In particular $|H(1,x,y)|\le K$. Applying the maximum principle for the heat equation on networks \cite{JvB2}, it holds
\beq\label{est:normeinftyH2}
|H(t,x,y)|\le K\,,\qquad\qquad\forall\ (t,x,y)\in[1,\infty)\times\G\times\G\,,
\eeq
and \eqref{est:norme1H} follows for $t\ge1$. On the other hand, for $t\in(0,1)$ and $x=\Pi^\pm_i(\xi)\in\overline e_i$ arbitrarily fixed, we have
\[
\begin{split}
\int_\G|H(t,x,y)|dy&\le\int_0^1G(t,|\xi-\eta|)d\eta+\int_\G|L(t,x,y)|dy\\
&\le 1+\sum_{j=1}^m\f{\kappa(e_j)}{\kappa(e_i)}\sum_{k=0}^{+\infty}(\overline\epsilon\,m)^{k+1}\,e^{-k^2/4}\int_0^1\f1{\sqrt{\pi\,t}}e^{-\eta^2/4t}d\eta\le 1+K\,.
\end{split}
\]
The proof of \eqref{est:norme1H} is now complete. From \eqref{est:H} and \eqref{est:normeinftyH2}, estimate \eqref{est:normeinftyH} follows too.

For the derivative of $H$ we have
\[
\partial_yH(t,x,y)=\delta_{i,j}\,\kappa^{-1}(e_i)\partial_yG(t,d(x,y))+\partial_yL(t,x,y)\,,\qquad\forall\ (t,x,y)\in(0,\infty)\times\G\times\G\,,
\]
where
\[
\partial_yL(t,x,y)=\sum_{k\ge \rho(x,y)}\sum_{C\in C_{k+2}(x,y)}\kappa^{-1}(e_i)\,\epsilon(C)\partial_yG(t,d_C(x,y))\,.
\]
%Let $x\in\G$ be fixed, $y=\Pi^\pm_j(\eta)\in \overline e_j$, $j=1,\dots,m$, and define $f(z)=ze^{-z^2}$.
Observing that for any $C\in C_{k+2}(x,y)$ we have $d_C(x,y)=(1-\xi)+k+\eta$, we get the estimate
\beq\label{est:normeinftydevH1}
|\partial_yH(t,x,y)|\le K\,t^{-1}[1+\sum_{k=0}^{+\infty}(\overline\epsilon\,m)^k\,e^{-k^2/4t}+\sum_{k=0}^{+\infty}(\overline\epsilon\,m)^k\f k{2\sqrt t}\,e^{-k^2/4t}]\,,\qquad\forall\ (t,x,y)\in(0,\infty)\times\G\times\G\,.
\eeq
Moreover,
\beq\label{est:normeinftydevH2}
\sum_{k=0}^{+\infty}(\overline\epsilon\,m)^k\f k{2\sqrt t}\,e^{-k^2/4t}\le\f 1{2\sqrt t}\,e^{-1/4t}\sum_{k=1}^{+\infty}(\overline\epsilon\,m)^kk\,e^{-(k-1)^2/4t}\,,
%\le C\sum_{k=1}^{+\infty}(\overline\epsilon\,m)^kk\,e^{-(k-1)^2/4t}\,,
\eeq
and \eqref{est:normeinftydevH} follows for $t\le1$, from \eqref{est:normeinftydevH1} and \eqref{est:normeinftydevH2}. For $t>1$, we obtain \eqref{est:normeinftydevH} as before, observing that $|\partial_yH(1,x,y)|\le K$ and applying again the maximum principle.

It remains to prove \eqref{est:norme1derH} for $t\in(0,1)$, since for $t\ge1$ \eqref{est:norme1derH} follows by the obtained estimate $\|\partial_yH(t)\|_{L^\infty(\G\times\G)}\le K$. Let $x=\Pi^\pm_i(\xi)\in\overline e_i$ be arbitrarily fixed. We have
\[
\begin{split}
\int_\G|\partial_yH(t,x,y)|dy&\le\int_0^1G(t,|\xi-\eta|)\f{|\xi-\eta|}{2t}d\eta+\int_\G|\partial_yL(t,x,y)|dy\\
&\le K\,t^{-1/2}+C\sum_{j=1}^m\f{\kappa(e_j)}{\kappa(e_i)}\sum_{k=0}^{+\infty}(\overline\epsilon\,m)^{k+1}\f{(1-\xi)}te^{-(1-\xi)^2/4t}\,e^{-k^2/4}\int_0^1e^{-\eta^2/4t}\f{d\eta}{2\sqrt t}\\
&\quad+C\sum_{j=1}^m\f{\kappa(e_j)}{\kappa(e_i)}\sum_{k=0}^{+\infty}(\overline\epsilon\,m)^{k+1}\,e^{-k^2/4}
\f1{\sqrt{t}}\int_0^1e^{-\eta^2/4t}\f\eta{4t}d\eta\\
&\quad+C\sum_{j=1}^m\f{\kappa(e_j)}{\kappa(e_i)}\sum_{k=0}^{+\infty}(\overline\epsilon\,m)^{k+1}\,\f kt\,e^{-k^2/4t}\int_0^1e^{-\eta^2/4t}\f{d\eta}{2\sqrt t}\\
&\le K\,t^{-1/2}+\f C{\sqrt t}\sum_{k=0}^{+\infty}(\overline\epsilon\,m)^{k}\,e^{-k^2/4}+\f C{\sqrt t}\sum_{k=0}^{+\infty}(\overline\epsilon\,m)^k\f k{2\sqrt t}\,e^{-k^2/4t}\,.
\end{split}
\]
Arguing as in \eqref{est:normeinftydevH2}, we arrive at
\[
\sup_{x\in\G}\|\partial_yH(t,x,\cdot)\|_{L^1(\G)}\le K\,t^{-1/2}\,,\qquad\forall\ t\in(0,1)\,.
\]

Finally, the estimate $\sup_{y\in\G}\|\partial_yH(t,\cdot,y)\|_{L^1(\G)}\le K\,t^{-1/2}$, for $t\in(0,1)$, can be obtained as above, changing the role between $\xi$ and $\eta$, and the proof is complete. \\
\end{proof}
\begin{proof}[Proof of Proposition \ref{prop:elliptic}]
We say that  a function $w\in H^1(\G)$ is a weak solution of \eqref{elliptic} if it a solution of the variational formulation of \eqref{elliptic}
\beq\label{elliptic1}
a(w,\phi)=(z,\phi)_{L^2(\G)}\,, \qquad\forall\ \phi\in H^1(\G)\,,
\eeq
where  $a(w,\phi):=\int_\G(w'(x)\phi'(x)+\alpha\,w(x)\phi(x))\,dx$ is a continuous,  coercive bilinear form  on $H^1(\G)$. A  standard application of the Lax-Milgram's Theorem gives the existence and uniqueness of the solution of \eqref{elliptic1}. The inequality  \eqref{maxprinc} is consequence of the maximum principle  (see \cite{N3}).  To prove \eqref{estderiv}, observe that   the  equation and \eqref{maxprinc}   imply that
$w''_j$ is bounded for all $j=1,\dots,m$ and
\[
\|w''\|_{L^\infty(\G)}\le \|z\|_{L^\infty(\G)}\,.
\]
By the previous estimate and the Morrey's inequality we get the estimate \eqref{estderiv}.
\end{proof}
%
%Let $G$ be the head kernel in dimension $d\ge1$, i.e.
%\[
%G(x,t)=\f1{(4\pi\,t)^{\f d2}}e^{-\f{|x|^2}{4t}}\,,\qquad x\in\R^d\,,\ t>0\,.
%\]
%Then, for any $p\in[1,\infty]$ it holds
%\[
%\|G(t)\|_{L^p}=\f{C(p,d)}{t^{\f d2(1-\f1p)}}\,,\qquad t>0\,.
%\]
%Indeed, for $p=+\infty$ it is obvious, and for $p\in[1,\infty)$ we have
%\[
%\int_{\R^d}G^p(x,t)\,dx=\f1{(4\pi\,t)^{\f d2p}}\left(\f{2\sqrt t}{\sqrt p}\right)^d\int_{\R^d}e^{-|y|^2}dy\,.
%\]
%Moreover,
%\[
%\nabla G(x,t)=\f1{(4\pi\,t)^{\f d2}}e^{-\f{|x|^2}{4t}}\left(-\f x{2t}\right)=\f1{(4\pi\,t)^{\f d2}}\f1{\sqrt t}f\left(\f x{2\sqrt t}\right)\,,\qquad x\in\R^d\,,\ t>0\,.
%\]
%with
%\[
%f(y)=-y\,e^{-|y|^2}\,,\qquad y\in\R^d\,.
%\]
%Then, in the same way, we have for all $p\in[1,\infty]$
%\[
%\|\nabla G(t)\|_{L^p}=\f{C(p,d)}{t^{\f d2(1-\f1p)+\f12}}\,,\qquad t>0\,.
%\]
%%%%%%%%%%%%%%%%%%%%%%%%%%%%%%%%%%%%%
\begin{acknowledgment} The authors would like to thanks Jean-Pierre Roth for helpful discussions.
 This research was initiated when the first author was visiting the Universit\'e d'Evry Val d'Essonne; the financial support and kind hospitality are gratefully acknowledged. The second author acknowledges the support of the french ``ANR blanche'' project Kibord : ANR-13-BS01-0004  and of ``Progetto
 Gnampa 2015: Network e controllabilit\`a".
\end{acknowledgment}
%%%%%%%%%%%%%%%%%%%%%%%%%%%%%%%%%%%%
%%%%%% BIBLIO %%%%%%%%%%%%%%%%%%%%%%
%%%%%%%%%%%%%%%%%%%%%%%%%%%%%%%%%%%%
\bibliographystyle{amsplain}

%%%%%%%%%%%%%%%%%%%%%%%%%%%%%%%%%%%%
\noindent ${^a}$ Dipartimento di Scienze di Base e Applicate per l'Ingegneria, \\
``Sapienza'' Universit{\`a}  di Roma, \\
via Scarpa 16, 0161 Roma (Italy) \\
e-mail:camilli@dmmm.uniroma1.it

%%%%%%%%%%%%%%%%%%%%%%%%%%%%%%%%%%%%
\noindent ${^b}$ LaMME - UMR 8071 \\
Universit\'e d'Evry Val d'Essonne, \\
23 Bd. de France, F--91037 Evry Cedex, France\\
e-mail:lucilla.corrias@univ-evry.fr
%%%%%%%%%%%%%%%%%%%%%%%%%%%%%%%%%%%%%%
\end{document}